\newtheorem{theorem}{Theorem}
\newtheorem{lemma}[theorem]{Lemma}
\newtheorem*{one-to-one conversion}{One-to-one conversion}
\begin{document}

\title{State matrix recursion method and monomer--dimer problem}

\author[S. Oh]{Seungsang Oh}
\address{Department of Mathematics, Korea University, Seoul 02841, Korea}
\email{seungsang@korea.ac.kr}

\thanks{Mathematics Subject Classification 2010: 05A15, 05B45, 05B50, 82B20, 82D60}
\thanks{This work was supported by the National Research Foundation of Korea(NRF) grant funded
by the Korea government(MSIP) (No. NRF-2017R1A2B2007216).}

\maketitle

\begin{abstract}
The exact enumeration of pure dimer coverings on the square lattice was obtained by
Kasteleyn, Temperley and Fisher in 1961. 
In this paper, we consider the monomer--dimer covering problem (allowing multiple monomers)
which is an outstanding unsolved problem in lattice statistics.
We have developed the state matrix recursion method 
that allows us to compute the number of monomer--dimer coverings
and to know the partition function with monomer and dimer activities.
This method proceeds with a recurrence relation of so-called state matrices of large size.
The enumeration problem of pure dimer coverings and dimer coverings with single boundary monomer
is revisited in partition function forms.
We also provide the number of dimer coverings with multiple vacant sites.
The related Hosoya index and the asymptotic behavior of its growth rate are considered.
Lastly, we apply this method to the enumeration study of domino tilings of Aztec diamonds 
and more generalized regions, so-called Aztec octagons and multi-deficient Aztec octagons.
\end{abstract}

\section{Introduction} \label{sec:intro}

The monomer--dimer problem is one of simplicity of definition, but famous unsolved problem,
and has a long and glorious history.
The monomer--dimer system has been used as a model of a physical system~\cite{FR, Ka2},
but primarily it is interesting as the matching counting problem in combinatorics~\cite{LP}.
While it is known that it does not exhibit a phase transition~\cite{HL},
there have been only limited closed-form results.

It gained momentum in 1961 when Kasteleyn~\cite{Ka1} and Temperley and Fisher~\cite{Fi,TF} 
found the exact solution of the enumeration of pure dimer coverings (i.e., no monomers).
Pure dimer coverings are often considered as perfect matchings or domino tilings.
The total number of pure dimer coverings in the $m \! \times \! n$ square lattice with even $mn$ is
$$\prod^m_{j=1} \prod^n_{k=1} \left| 2 \cos(\frac{\pi j}{m+1}) + 2 i \cos(\frac{\pi k}{n+1}) \right|^{\frac{1}{2}}.$$

In 1974, Temperley~\cite{Te} found an intriguing bijection  
between spanning trees of the $m \! \times \! n$ square lattice
and pure dimer coverings in the $(2m \! + \! 1) \! \times \! (2n \! + \! 1)$ square lattice with a corner removed.
This offers an alternate approach to the vertex vacancy problem.
Recently, Tzeng and Wu~\cite{TW} used Temperley bijection
to enumerate dimer coverings with a fixed single monomer on the boundary.

The purpose of this paper is to introduce a method 
for the enumeration of monomer--dimer coverings (allowing multiple monomers),
that is called the {\em state matrix recursion method\/}.
More precisely, it provides a recursive formula of state matrices to give the partition function
with respect to monomer and dimer activities.
A typical example of a monomer--dimer covering
in the $m \! \times \! n$ square lattice is drawn in Figure~\ref{fig:MD}.
In Section~\ref{sec:problem}, 
we state several monomer--dimer problems that are considered in this paper.

\begin{figure}[h]
\includegraphics{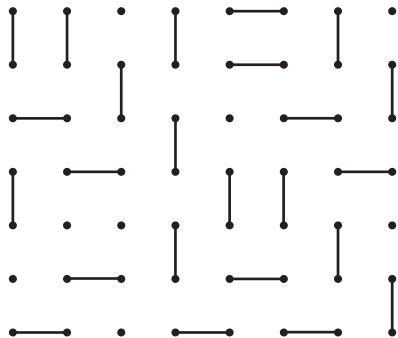}
\caption{A monomer--dimer covering in $\mathbb{Z}_{8 \times 7}$}
\label{fig:MD}
\end{figure}

The state matrix recursion method is divided into three stages; 
\begin{itemize}
\item Stage 1. Conversion to the mosaic system
\item Stage 2. State matrix recursion formula
\item Stage 3. Analyzing the state matrix
\end{itemize}
In Sections~\ref{sec:stage1}~$\! \sim \!$~\ref{sec:stage3},
we formulate the method and show the main result at the end.
Section~\ref{sec:growth} is devoted to the study of the asymptotic behavior of the growth rate of 
the Hosoya index of the $m \! \times \! n$ square lattice.
In Section~\ref{sec:fixed}, the dimer covering problem with multiple vacant sites is handled.

As an application of this method,
we also consider the domino tiling problem of the  Aztec diamond and its variant regions.
The Aztec diamond theorem from the excellent article of Elkies, Kuperberg, Larsen and Propp~\cite{EKLP}
states that the Aztec diamond of order $n$ can be tiled by dominos in exactly $2^{n(n+1)/2}$ ways.
A simple proof of this theorem can be found in~\cite{EF}.
An augmented Aztec diamond of order $n$ looks much like the Aztec diamond of order $n$,
except that there are three long columns in the middle instead of two.
Compare left two regions in Figure~\ref{fig:Aztec}.
The number of domino tilings of the augmented Aztec diamond of order $n$
was found by Sachs and Zernitz~\cite{SZ}
as $\sum_{k=0}^n {{n}\choose{k}} \cdot {{n+k}\choose{k}}$, known as the Delannoy numbers.
Notice that the former number is much larger than the later.
The enumeration problem of domino tilings of a region is known to be very sensitive 
to its boundary condition~\cite{MS1, MS2}.
Dozens of interesting patterns related to the Aztec diamond allowing some squares removed
have been deeply studied and a survey of these works was proposed by Propp~\cite{Pr}.
For example, see the rightmost figure showing a domino tiling of 
a 4-by-5 Aztec rectangle with its central square removed.

\begin{figure}[h]
\includegraphics{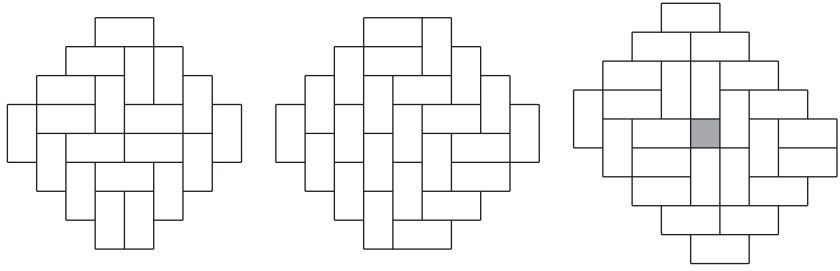}
\caption{Domino tilings of various Aztec regions}
\label{fig:Aztec}
\end{figure}

In Section~\ref{sec:domino}, 
we study the domino tilings of the most generalized region among Aztec diamond variants, 
called an Aztec octagon, obtained from the rectangular grid 
with four triangular corners (not necessary to be congruent) removed 
as drawn in Figure~\ref{fig:AztecOctagon}.

As another interesting application, 
this method provides a recursive matrix-relation producing the exact number of
independent vertex sets on the square lattice in papers~\cite{OhV1, OhV2}.

\section{Monomer--dimer problems} \label{sec:problem}

Let $\mathbb{Z}_{m \times n}$ denote the $m \! \times \! n$ rectangular grid on the square lattice.
A {\em dimer\/} is an edge connecting two nearest vertices.
Horizontal and vertical dimers are considered as $x$-dimers and $y$-dimers, respectively.
Dimers must be placed so that no vertex belongs to more than one dimer.
An unoccupied vertex is called a {\em monomer\/}.
The partition function of $\mathbb{Z}_{m \times n}$ with monomer and dimer activities,
assigned weights $v, x, y$ to monomers, $x$-dimers and $y$-dimers respectively, is defined by
$$G_{m \times n}(v,x,y) = \sum v^{n_v} x^{n_x} y^{n_y}$$
with respect to the numbers $n_v, n_x, n_y$ of monomers, $x$-dimers and $y$-dimers respectively, 
where the summation is taken over all monomer--dimer coverings.
Note that $n_v + 2(n_x + n_y) = mn$ in each term.

Based upon the state matrix recursion method,
we present a recursive matrix-relation producing this partition function.
Hereafter $\mathbb{O}_k$ denotes the $2^k \! \times \! 2^k$ zero-matrix.

\begin{theorem} \label{thm:dimer}
The partition function is
$$ G_{m \times n}(v,x,y) = (1,1)\mbox{-entry of } (A_m)^n, $$
where $A_m$ is the $2^m \! \times \! 2^m$ matrix\footnote{
In this theorem we may replace the recursive relation by
$$ A_{k} = A_{k-1 \otimes} \begin{bmatrix} v & 1 \\ y & 0 \end{bmatrix} +
A_{k-2} \otimes {\Small \begin{bmatrix} x & 0 & 0 & 0 \\ 0 & 0 & 0 & 0 \\ 
0 & 0 & 0 & 0 \\ 0 & 0 & 0 & 0 \end{bmatrix}}$$
in tensor product form.
This will be explained after the proof of Lemma~\ref{lem:bar}.} 
recursively defined by
$$ A_{k} = \begin{bmatrix} v A_{k-1} +
x {\Small \begin{bmatrix} A_{k-2} & \mathbb{O}_{k-2} \\ \mathbb{O}_{k-2} & \mathbb{O}_{k-2} \end{bmatrix}}
& A_{k-1} \\ y A_{k-1} & \mathbb{O}_{k-1} \end{bmatrix} $$
for $k=2, \dots, m$, with seed matrices
$A_0 = \begin{bmatrix} 1 \end{bmatrix}$ and
$A_1 = \begin{bmatrix} v & 1 \\ y & 0 \end{bmatrix}$.
\end{theorem}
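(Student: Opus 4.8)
The plan is a transfer-matrix argument. One reads $\mathbb{Z}_{m\times n}$ as $n$ consecutive rows, each a horizontal line of $m$ cells, with successive rows joined only by $y$-dimers, so that a single application of $A_m$ accounts for one such row; this is carried out along the three stages announced above. For Stage~1 I would recast a monomer--dimer covering as a \emph{mosaic}: fill each of the $mn$ cells with one symbol from a small alphabet --- a monomer mark, the left and right halves of an $x$-dimer, and the top and bottom halves of a $y$-dimer --- subject to local edge-matching rules that force adjacent half-dimers to pair up into genuine dimers and forbid any cell from being covered twice. This produces a weight-preserving bijection between monomer--dimer coverings of $\mathbb{Z}_{m\times n}$ and admissible mosaics, where a mosaic is assigned $v^{n_v}x^{n_x}y^{n_y}$, so it suffices to enumerate admissible mosaics by this weight.

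For Stage~2, define the \emph{state} along the horizontal line between row $r$ and row $r{+}1$ to be the length-$m$ bit string whose $c$-th entry is $1$ exactly when the cell in column $c$ of row $r{+}1$ is the bottom half of a $y$-dimer descending from row $r$; there are $2^m$ states, indexed so that state $1$ is the all-zero string. Let $A_m$ be the $2^m\times 2^m$ matrix whose $(\sigma,\tau)$ entry is the sum of the weights of all admissible fillings of one row of $m$ cells compatible with incoming state $\sigma$ and outgoing state $\tau$, where by convention the weight $y$ of a $y$-dimer is booked in the lower of its two rows. Since no $y$-dimer protrudes above row $1$ or below row $n$, expanding the product $(A_m)^n$ over the intermediate states reproduces precisely the weighted sum over admissible mosaics whose two extreme states are the all-zero string; hence $G_{m\times n}(v,x,y)$ is the $(1,1)$-entry of $(A_m)^n$. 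The seed cases $A_0=[1]$ and $A_1=\begin{bmatrix}v&1\\y&0\end{bmatrix}$ follow at once from this description.

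It then remains, for Stage~3, to verify the recursion for $A_m$. I would build the transfer matrix of a row of $k$ cells by adjoining one new cell at the left end, making its incoming/outgoing bit the leading coordinate, so that the displayed $2\times2$ block decomposition of $A_k$ is exactly the split according to the status of the new cell, the old row of $k{-}1$ cells contributing $A_{k-1}$ on the remaining coordinates. A short case analysis supplies the four blocks: when the new cell is neither pre-covered from above nor sends a dimer downward it is either a monomer (factor $v$, rest of the row free, so $vA_{k-1}$) or the left half of an $x$-dimer with its right neighbour, which costs $x$ and pins that neighbour's two bits to $0$ while leaving the other $k{-}2$ cells free, giving $x$ times $\begin{bmatrix}A_{k-2}&\mathbb{O}_{k-2}\\\mathbb{O}_{k-2}&\mathbb{O}_{k-2}\end{bmatrix}$; if the new cell only sends a $y$-dimer downward its weight is deferred to the next row, contributing the bare $A_{k-1}$; if it is only pre-covered from above, the completed $y$-dimer is booked here, contributing $yA_{k-1}$; and the remaining combination, simultaneously pre-covered and descending, is impossible, giving $\mathbb{O}_{k-1}$. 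Assembling the four blocks yields the stated recursion, and one checks that the footnote's tensor formula is the same computation repackaged: the tensor-type product of $A_{k-1}$ with $\begin{bmatrix}v&1\\y&0\end{bmatrix}$ records the monomer, descending, and pre-covered cases on the new cell, and $A_{k-2}$ tensored with the $4\times4$ matrix carrying $x$ in its $(1,1)$-slot records the $x$-dimer straddling the new cell and its neighbour. I expect the delicate points to be the Stage~1 choice of tile set and matching rules making admissibility equivalent to being a genuine covering, and, in Stage~3, the bookkeeping that keeps each $y$-dimer counted exactly once --- the source of the asymmetry between the $A_{k-1}$ and $yA_{k-1}$ off-diagonal blocks --- together with recognizing that the straddling $x$-dimer constrains two coordinates of the neighbour, which is what forces the $A_{k-2}$ term and the surrounding zero pattern; once the state is correctly isolated as ``bottom half of a descending $y$-dimer, and nothing else,'' the Markov property legitimising the matrix product is immediate, since an in-row $x$-dimer and a monomer impose no condition on the next row.
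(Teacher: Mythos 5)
Your proposal is correct and follows essentially the same route as the paper: the same five-tile mosaic conversion, the same $2^m$ states recording which cells receive a protruding vertical dimer, the same row-transfer-matrix product giving the $(1,1)$-entry, and the same four-case analysis on the end cell of a row to get the recursion. The only cosmetic differences are orientation conventions (which end of the row you grow, which of the two rows books the $y$-weight) and that you fold the paper's auxiliary matrix $B_{k-1}=\left[\begin{smallmatrix}A_{k-2}&\mathbb{O}_{k-2}\\ \mathbb{O}_{k-2}&\mathbb{O}_{k-2}\end{smallmatrix}\right]$ (which tracks a pending horizontal-dimer half) directly into the $A_{k-2}$ block, exactly as the paper does when merging its two coupled recurrences into the single one stated in the theorem.
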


Theorem~\ref{thm:dimer} presents a number of important consequences as follows.
First, we can derive the matching polynomial for $\mathbb{Z}_{m \times n}$
$$m_{\mathbb{Z}_{m \times n}}(z) = G_{m \times n}(1,z,z)$$
each of whose coefficient of $z^k$ indicates the number of $k$-edge matchings.

Second, $G_{m \times n}(1,1,1)$ gives the number of monomer--dimer coverings,
known as the Hosoya index of $\mathbb{Z}_{m \times n}$.
The Hosoya index~\cite{Ho} and the Merrifield-Simmons index~\cite{MS1, MS2} of a graph
are two prominent examples of topological indices 
which are used in mathematical chemistry for quantifying molecular-graph based structure descriptors.
The sequence of $G_{n \times n}(1,1,1)$, for $m \! = \! n$, grows in a quadratic exponential rate.
We focus on the asymptotic behavior of the growth rate per vertex.
Let
$$ \delta = \lim_{m, n \rightarrow \infty} (G_{m \times n}(1,1,1))^{\frac{1}{mn}}, $$
provided that it exists.
The existence of that limit was proved in~\cite{HLLB}.
A two-dimensional application of the Fekete's lemma shows again the existence of the limit.
The following theorem will be proved in Section~\ref{sec:growth}.

\begin{theorem} \label{thm:growth}
The double limit $\delta$ exists.
More precisely,
$$ \delta = \sup_{m, n \geq 1} (G_{m \times n}(1,1,1))^{\frac{1}{mn}}.$$
\end{theorem}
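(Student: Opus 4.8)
The plan is to establish the two ingredients of a Fekete-type argument for the doubly-indexed sequence $a_{m,n} = G_{m \times n}(1,1,1)$: supermultiplicativity in each index separately, and then to invoke (a two-dimensional version of) Fekete's lemma to conclude that the limit equals the supremum. First I would prove the supermultiplicativity $a_{m_1+m_2,\,n} \geq a_{m_1,n}\cdot a_{m_2,n}$ and, symmetrically, $a_{m,\,n_1+n_2} \geq a_{m,n_1}\cdot a_{m,n_2}$. The combinatorial reason is simple: given a monomer--dimer covering of $\mathbb{Z}_{m_1 \times n}$ and one of $\mathbb{Z}_{m_2 \times n}$, stacking them produces a monomer--dimer covering of $\mathbb{Z}_{(m_1+m_2) \times n}$ (the vertices along the seam are already covered, and no new edges are forced); this map is injective, which gives the inequality. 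Equivalently, one can read this off Theorem~\ref{thm:dimer}: since $G_{m \times n}(1,1,1)$ is the $(1,1)$-entry of $(A_m)^n$ and $A_m$ has nonnegative entries, $\big((A_m)^{n_1+n_2}\big)_{1,1} \geq \big((A_m)^{n_1}\big)_{1,1}\big((A_m)^{n_2}\big)_{1,1}$ because dropping all intermediate summation indices except the one equal to $1$ only decreases a sum of nonnegative terms.

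Next I would pass to logarithms: set $b_{m,n} = \log a_{m,n}$ (note $a_{m,n} \geq 1$, so this is well-defined and nonnegative), so that $b$ is superadditive in each index. Applying the classical one-dimensional Fekete lemma in the first coordinate (with the second fixed) gives $\lim_{m\to\infty} \frac{b_{m,n}}{m} = \sup_{m\geq 1} \frac{b_{m,n}}{m}$ for each fixed $n$; call this $c_n$. One checks that $(c_n)$ is itself superadditive in $n$ (a supremum of superadditive sequences divided by $m$, or directly from the two-variable inequality), and nonnegative, so a second application of Fekete's lemma yields $\lim_{n\to\infty} \frac{c_n}{n} = \sup_{n\geq 1}\frac{c_n}{n}$. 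Combining and checking that the iterated limit equals the double limit $\lim_{m,n\to\infty}\frac{b_{m,n}}{mn}$ gives $\lim_{m,n\to\infty} \frac{b_{m,n}}{mn} = \sup_{m,n\geq 1}\frac{b_{m,n}}{mn}$, and exponentiating gives exactly $\delta = \sup_{m,n\geq 1}(G_{m\times n}(1,1,1))^{1/mn}$.

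The main obstacle — and the only place where genuine care is needed — is the step where the iterated limit is upgraded to the genuine double limit $\lim_{m,n\to\infty}$, i.e. showing that for every $\varepsilon > 0$ there is an $N$ with $\big|\frac{b_{m,n}}{mn} - \delta\big| < \varepsilon$ for all $m,n \geq N$ simultaneously, not merely along the diagonal or in an iterated fashion. This requires a uniform estimate: using superadditivity one writes $m = qp + r$, $n = q'p' + r'$ for a fixed large reference block $\mathbb{Z}_{p \times p'}$ and bounds $b_{m,n}$ from below by $qq'\,b_{p,p'}$ plus error terms that are $O(m+n)$ relative to the main term $O(mn)$; dividing by $mn$ and letting $m,n \to \infty$ makes the error vanish uniformly, giving $\liminf_{m,n\to\infty}\frac{b_{m,n}}{mn} \geq \frac{b_{p,p'}}{pp'}$ for every $p,p'$, hence $\geq \delta$. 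The reverse inequality $\limsup \leq \delta$ is immediate since each term $\frac{b_{m,n}}{mn}$ is bounded above by the supremum $\delta$. This two-dimensional Fekete argument is routine but the block-decomposition bookkeeping is the part I would write out in full.
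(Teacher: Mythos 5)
Your proposal is correct and follows essentially the same route as the paper: supermultiplicativity of $G_{m \times n}(1,1,1)$ in each index (by abutting coverings), followed by a two-dimensional Fekete argument whose block/division-algorithm decomposition is exactly what the paper packages as its Generalized Fekete Lemma (Lemma~\ref{lem:Fekete} with $k=0$). The only detail you omit that the paper supplies is the one-line bound $\sup_{m,n}(G_{m \times n}(1,1,1))^{\frac{1}{mn}} \le 5$ (five mosaic tiles per site), which guarantees that the supremum, and hence $\delta$, is finite.
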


Third, known as the pure dimer problem for even $mn$,
$G_{m \times n}(0,x,y)$ is the partition function of $\mathbb{Z}_{m \times n}$ only with dimer activity,
assigned weights $x, y$ to $x$-dimers and $y$-dimers respectively.
Remark that, instead of the form $G_{m \times n}(0,1,1)$ of the number of pure dimer coverings,
a better closed form of this number was already founded as mentioned in the introduction.
Hammersley~\cite{Ha} showed that the following limit exists and
from the exact results~\cite{Ka1, TF}, we know that
$$ \lim_{n \rightarrow \infty} (G_{2n \times 2n}(0,1,1))^{\frac{1}{4n^2}} = e^{\frac{C}{\pi}} = 1.338515\cdots,$$
where $C$ is the Catalan's constant.

Fourth, the coefficient of the degree 1 term $v$ of $G_{m \times n}(v,1,1)$ indicates
the number of dimer coverings with a single vacancy (non-fixed and on/off the boundary) for odd $m$ and $n$.
But, more interesting models are dimer coverings 
with a fixed single vacancy on the boundary \cite{Ko, TW, Wu}.
A fixed single boundary monomer covering, say, in $\mathbb{Z}_{m \times n}$ for odd $m$ and $n$
is a monomer--dimer covering with exactly one fixed monomer on the boundary,
having odd-numbered $x$- and $y$-coordinates.
It is known that the number of dimer coverings with fixed single boundary monomer does not depend on
the location of the fixed monomer~\cite{TW}.

\begin{theorem} \label{thm:single}
Let $G^s_{m \times n}(v,x,y)$ be the $(2,1)$-entry of $(A_m)^n$ in Theorem~\ref{thm:dimer}
for odd $m$ and $n$.
Then $G^s_{m \times n}(0,1,1)$ is the number of fixed single boundary monomer coverings
in $\mathbb{Z}_{m \times n}$.
\end{theorem}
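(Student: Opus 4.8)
The plan is to piggyback on the proof of Theorem~\ref{thm:dimer}, keeping everything but the boundary condition identical. Recall from that proof the combinatorial reading of the state matrix: the rows and columns of $A_m$ are indexed by binary strings of length $m$ (equivalently by $1,\dots,2^m$), a string recording cell by cell which sites of a vertical section of $\mathbb{Z}_{m\times n}$ are already occupied by a horizontal ($x$-)dimer straddling that section. Consequently, for any $i,j$ the $(i,j)$-entry of $(A_m)^n$ is the generating polynomial $\sum v^{n_v}x^{n_x}y^{n_y}$ over all \emph{partial} monomer--dimer configurations of $\mathbb{Z}_{m\times n}$ whose overhang profiles along the two outer vertical edges are the strings indexed by $i$ and $j$ (an overhang being a horizontal dimer straddling that edge and sticking out of the grid); the $(1,1)$-entry, both profiles empty, is $G_{m\times n}(v,x,y)$, which is exactly Theorem~\ref{thm:dimer}.

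Next I would pin down the state labelled $2$. Under the indexing used for Theorem~\ref{thm:dimer}, state $2$ is the string with a single occupied site, and that site sits at a corner of $\mathbb{Z}_{m\times n}$ — hence on the boundary and with odd $x$- and $y$-coordinates. Therefore the $(2,1)$-entry of $(A_m)^n$ is the weighted count of partial configurations of $\mathbb{Z}_{m\times n}$ that carry exactly one overhanging $x$-dimer, hooked onto that corner site and pointing out of the grid, and no other overhang. The point that needs a little care here is to confirm, directly from the cell-by-cell recursion defining $A_m$, that such a left (or right) overhang at the corner forces that corner site to be used only by the phantom dimer — no vertical dimer or monomer can also occupy it — and that its weight is recorded exactly once, as $x$.

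Then comes the evaluation at $(v,x,y)=(0,1,1)$. Setting $v=0$ annihilates every term containing a genuine monomer, so the surviving partial configurations are precisely those in which the corner site is covered by the phantom $x$-dimer and all $mn-1$ remaining sites are covered by honest dimers. Deleting the phantom dimer is a bijection onto the set of dimer tilings of $\mathbb{Z}_{m\times n}$ with that corner site removed, i.e.\ the monomer--dimer coverings of $\mathbb{Z}_{m\times n}$ with a single monomer fixed at that corner; and since $x=y=1$ makes every weight equal to $1$, the bijection is weight-preserving. Hence $G^s_{m\times n}(0,1,1)$ equals the number of such coverings, which is nonzero because $m$ and $n$ are odd (a parity count also shows the $(2,1)$-entry at $(0,1,1)$ vanishes when $mn$ is even). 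Finally, by the result of Tzeng and Wu~\cite{TW} this number is independent of which boundary site of odd coordinates carries the monomer, so it is exactly the number of fixed single boundary monomer coverings of $\mathbb{Z}_{m\times n}$.

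The main obstacle is bookkeeping rather than substance: correctly transcribing the meaning of the index $2$ and of the off-diagonal entries of $(A_m)^n$ out of the formalism built for Theorem~\ref{thm:dimer}, and checking that an overhang at the corner site threads through the recursion exactly as a phantom horizontal dimer on that cell would. Once that is nailed down, the remainder is the one-line bijection above together with the citation to~\cite{TW}.
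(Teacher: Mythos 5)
Your proposal is correct and follows essentially the same route as the paper: identify the $(2,1)$-entry of $(A_m)^n$ as the state polynomial with second $b$-state and all other states trivial, observe that the resulting forced phantom dimer at the corner site $(1,1)$ plays the role of a fixed monomer, evaluate at $(v,x,y)=(0,1,1)$, and invoke Tzeng--Wu~\cite{TW} for independence of the monomer's location among odd-coordinate boundary sites. One bookkeeping detail to correct: in this formalism the row and column indices of $(A_m)^n$ record the \emph{bottom} and \emph{top} boundary states (the left and right states are forced trivial in the definition of the state polynomial), so the state $\epsilon^m_2=\texttt{a}\cdots\texttt{ab}$ forces $M_{1,1}$ to be the tile $T_4$, i.e.\ a vertical phantom dimer protruding below the grid at the corner, rather than a horizontal overhang on a vertical edge --- a transposition that does not affect your argument or its conclusion.
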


Note that, instead of the (2,1)-entry,
we may use any $(i,j)$-entry of $(A_m)^n$ for $\{i,j\} = \{1,2^k \! + \! 1\}$ and $k=0,2,4,\dots, m \! - \! 1$.

We are turning now to a generalization of this fixed monomer argument so that
many sites are pre-assigned to monomers.
Let $S$ be a set of vertices in $\mathbb{Z}_{m \times n}$,
called a fixed monomer set as in Figure~\ref{fig:fixedconf}.
In this case, we only consider the number of monomer--dimer coverings
instead of the partition function with monomer and dimer activities,
by assigning 1 to the weights $v$, $x$ and $y$.
$g_{m \times n}(S)$ denotes the number of distinct monomer--dimer coverings
which have monomers exactly at the sites of $S$.
Here $(k,i)$ indicates the vertex placed at the $k$th column from left to right and
the $i$th row from bottom to top.

\begin{theorem} \label{thm:fixed}
For a given fixed monomer set $S$ in $\mathbb{Z}_{m \times n}$,
$$ g_{m \times n}(S) = (1,1)\mbox{-entry of } \prod^n_{i=1} A_{m,i}, $$
where $A_{m,i}$ is defined by the recurrence relations, for $k=1, \dots, m$, \\
if the vertex $(k,i)$ is contained in $S$,
$$ A_{k,i} = \begin{bmatrix} A_{k-1,i} & \mathbb{O}_{k-1} \\
\mathbb{O}_{k-1} & \mathbb{O}_{k-1} \end{bmatrix} \mbox{ and }
 B_{k,i} = \mathbb{O}_{k} $$
or if the vertex $(k,i)$ is not contained in $S$,
$$ A_{k,i} = \begin{bmatrix} B_{k-1,i} & A_{k-1,i} \\
A_{k-1,i} & \mathbb{O}_{k-1} \end{bmatrix} \mbox{ and }
 B_{k,i} = \begin{bmatrix} A_{k-1,i} & \mathbb{O}_{k-1} \\
\mathbb{O}_{k-1} & \mathbb{O}_{k-1} \end{bmatrix} $$
with seed matrices
$A_{0,i} = \begin{bmatrix} 1 \end{bmatrix}$ and $B_{0,i}= \begin{bmatrix} 0 \end{bmatrix}$.
\end{theorem}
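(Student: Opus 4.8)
The plan is to run the same three stages that prove Theorem~\ref{thm:dimer}, carrying the fixed monomer set $S$ through each of them and specializing all weights to $v=x=y=1$. In Stage~1 the correspondence of Section~\ref{sec:stage1} between monomer--dimer coverings of $\mathbb{Z}_{m\times n}$ and configurations of the mosaic system is local: the tile occupying the cell of a vertex records how that vertex is covered. Declaring $(k,i)\in S$ to be a forced monomer therefore simply forbids, at that single cell, every tile in which a dimer touches $(k,i)$, so the surviving mosaic configurations are exactly the monomer--dimer coverings of $\mathbb{Z}_{m\times n}$ whose monomer set is $S$, and $g_{m\times n}(S)$ counts them.

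The heart of the argument is Stage~2. Recall from the proof of Theorem~\ref{thm:dimer}, and in particular from Lemma~\ref{lem:bar}, that the transfer matrix of a row of the grid is assembled one vertex at a time and that the step from the pair $(A_{k-1},\bar A_{k-1})$ to $(A_k,\bar A_k)$ is precisely an enumeration of the local options at the cell $(k,i)$: the vertex is a monomer, or the lower or upper end of a $y$-dimer, or the left or right end of an $x$-dimer, and each option contributes one prescribed block of the $2^k\times 2^k$ recursion. Writing $A_{k,i}$, $B_{k,i}$ for the partial state matrix of row $i$ and its bar, the dichotomy is this: if $(k,i)\notin S$ every local option is still available, so $A_{k,i}$ and $B_{k,i}$ satisfy the unrestricted recursion of Lemma~\ref{lem:bar} with all weights equal to $1$, which is exactly the ``not contained in $S$'' recursion in the statement; if $(k,i)\in S$ only the monomer option survives, annihilating every other block and forcing $B_{k,i}=\mathbb{O}_k$, which is exactly the ``contained in $S$'' recursion. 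The seed matrices $A_{0,i}=[1]$, $B_{0,i}=[0]$ record the empty partial row, as in Lemma~\ref{lem:bar}.

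Stage~3 then goes through verbatim. The matrix $A_{m,i}$ is the transfer matrix across the whole of row $i$ with the $S$-restrictions of that row imposed; the product $\prod_{i=1}^{n}A_{m,i}$, taken in this order, is the transfer operator of the entire grid; and its $(1,1)$-entry selects the configurations whose boundary states below row~$1$ and above row~$n$ are trivial, i.e.\ the bona fide monomer--dimer coverings with monomer set exactly $S$. Hence the $(1,1)$-entry of $\prod_{i=1}^{n}A_{m,i}$ equals $g_{m\times n}(S)$.

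I expect the main obstacle to be the Stage~2 bookkeeping. One has to check not only that a forced monomer at $(k,i)$ deletes every covering in which some dimer meets that vertex, but also that it severs the horizontal-dimer accounting correctly --- a forced monomer can be neither the left nor the right end of an $x$-dimer reaching a neighbour --- which is what the vanishing off-diagonal blocks together with $B_{k,i}=\mathbb{O}_k$ must encode; and one has to confirm that the indexing of states remains consistent when the branch taken at column $k$ changes from row to row, so that the factors of $\prod_{i=1}^{n}A_{m,i}$ compose correctly. Once the ledger of local moves from Lemma~\ref{lem:bar} is available, both points are routine case checks.
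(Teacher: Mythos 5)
Your overall architecture matches the paper's: rerun the three stages, modify the bar-state recursion of Lemma~\ref{lem:bar} cell by cell according to membership in $S$, and multiply the $n$ row matrices. But there is a genuine error at the heart of your Stage~2. You assert that for $(k,i)\notin S$ ``every local option is still available, so $A_{k,i}$ and $B_{k,i}$ satisfy the unrestricted recursion of Lemma~\ref{lem:bar} with all weights equal to $1$, which is exactly the `not contained in $S$' recursion in the statement.'' Neither half of that sentence is right. The quantity $g_{m\times n}(S)$ counts coverings whose monomers sit \emph{exactly} at the sites of $S$, so at a vertex $(k,i)\notin S$ the monomer tile $T_1$ is \emph{forbidden}, not available; the vertex must be covered by a dimer. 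Correspondingly, the unrestricted recursion of Lemma~\ref{lem:bar} at $v=x=y=1$ has $11$-submatrix $A_{k-1}+B_{k-1}$, whereas the theorem's ``not contained in $S$'' recursion has $11$-submatrix $B_{k-1,i}$ alone --- precisely the recursion with the $vA_{k-1}$ (monomer) term deleted. The paper states this explicitly: ``because $T_1$ cannot be used in this step.'' If one actually ran the recursion you describe, one would count coverings with monomers at $S$ \emph{and possibly elsewhere}, which is a different quantity. Your treatment of the $(k,i)\in S$ case (only $T_1$ survives, hence $B_{k,i}=\mathbb{O}_k$) is correct.

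A secondary point you wave at but do not resolve: the paper's proof does not simply apply one of two local rules independently at each cell. It isolates a maximal consecutive run $\{(p,i),\dots,(q,i)\}\subseteq S$ and checks the two flanking cells separately --- at $(p-1,i)$ the tile $T_5$ is additionally forbidden (a forced monomer cannot be the right end of an $x$-dimer), forcing $B_{p-1,i}=\mathbb{O}_{p-1}$, and at $(q+1,i)$ the tile $T_2$ is forbidden, zeroing the $11$-block of $A_{q+1,i}$ --- and then verifies that these extra constraints are automatically absorbed by the two stated recursions (because $A_{p,i}$ and $B_{p,i}$ never consult $B_{p-1,i}$, and $B_{q,i}=\mathbb{O}_q$ already). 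Calling this a ``routine case check'' is fair only once the correct local rules are in place, which, per the previous paragraph, yours are not.
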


\begin{figure}[h]
\includegraphics{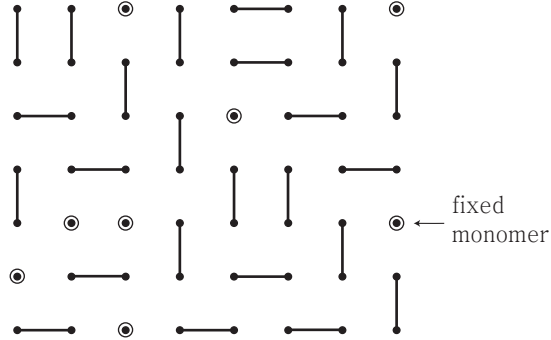}
\caption{A monomer--dimer covering with fixed monomers}
\label{fig:fixedconf}
\end{figure}

\section{Stage 1. Conversion to the monomer--dimer mosaic system} \label{sec:stage1}

This stage is dedicated to the installation of 
the mosaic system for monomer--dimer coverings on the square lattice.
Mosaic system is introduced by Lomonaco and Kauffman \cite{LK} to give
a precise and workable definition of quantum knots.
This definition is intended to represent an actual physical quantum system.

Recently, the author {\em et al\/}. have developed a state matrix argument 
for knot mosaic enumeration~\cite{HLLO2, HO, Oh1, OHLL, OHLLY}.
We follow the notation and terminology used in~\cite{OHLL} 
with much modification to adjust to the dimer system.

Five symbols $T_1$, $T_2$, $T_3$, $T_4$ and $T_5$ 
illustrated in Figure~\ref{fig:tile} are called {\em mosaic tiles\/}
(for monomer--dimer coverings on the square lattice).
Their side edges are labeled with two letters {\texttt a} and {\texttt b} as follows:
letter {\texttt a} if it is not touched by a thick arc on the tile, and letter {\texttt b} for otherwise.
In the original definition of knot mosaic theory, eleven symbols were used to represent a knot diagram.

\begin{figure}[h]
\includegraphics{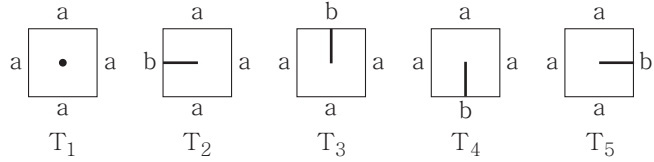}
\caption{Five mosaic tiles labeled with two letters}
\label{fig:tile}
\end{figure}

For positive integers $m$ and $n$,
an {\em $m \! \times \! n$-mosaic\/} is an $m \! \times \! n$ rectangular array $M = (M_{ij})$ of those tiles,
where $M_{ij}$ denotes the mosaic tile placed at the $i$th column from left to right
and the $j$th row from bottom to top. 
We are exclusively interested in mosaics whose tiles match each other properly
to represent monomer--dimer coverings.
This requires the followings: \vspace{2mm}

\begin{itemize}[leftmargin=*] \itemsep5pt
\item (Adjacency rule)
Abutting edges of adjacent mosaic tiles in a mosaic are labeled with the same letter.
\item (Boundary state requirement)
All boundary edges in a mosaic are labeled with letter {\texttt a}.
\end{itemize}
\vspace{2mm}

As illustrated in Figure~\ref{fig:conversion},
every monomer--dimer covering in $\mathbb{Z}_{m \times n}$ can be converted into 
an $m \! \times \! n$-mosaic which satisfies the two rules.
In this mosaic, a dot in each $T_1$ indicates a monomer,
and $T_2$ and $T_5$ (or, $T_3$ and $T_4$) can be adjoined 
along the edges labeled {\texttt b} to produce a dimer.
Note that the statements of the adjacency rule and boundary state requirement vary in different lattice models.

\begin{figure}[h]
\includegraphics{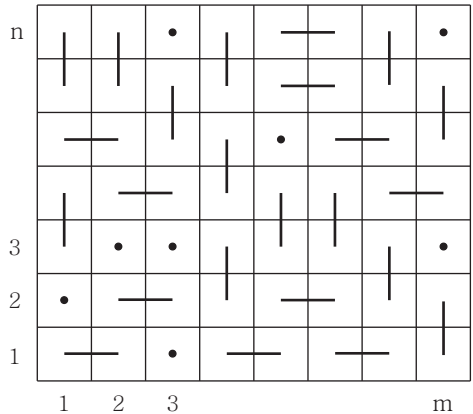}
\caption{Conversion of the monomer--dimer covering drawn in Figure~\ref{fig:MD} 
to a monomer--dimer $m \! \times \! n$-mosaic}
\label{fig:conversion}
\end{figure}

A mosaic is said to be {\em suitably adjacent\/} if any pair of mosaic tiles
sharing an edge satisfies the adjacency rule.
A suitably adjacent $m \! \times \! n$-mosaic is called a {\em monomer--dimer $m \! \times \! n$-mosaic\/}
if it additionally satisfies the boundary state requirement.
Key role is the following one-to-one conversion. 

\begin{one-to-one conversion} \label{observ}
There is a one-to-one correspondence between  
monomer--dimer coverings in $\mathbb{Z}_{m \times n}$ and monomer--dimer $m \! \times \! n$-mosaics.
\end{one-to-one conversion}

\section{Stage 2. State matrix recursion formula} \label{sec:stage2}

Now we introduce two types of state matrices for suitably adjacent mosaics
to produce the partition function $G_{m \times n}(v,x,y)$.

\subsection{States and state polynomials}

Let $p \leq m$ and $q \leq n$ be positive integers, 
and consider a suitably adjacent $p \! \times \! q$-mosaic $M$.
A {\em state\/} is a finite sequence of two letters {\texttt a} and {\texttt b}.
The {\em $b$-state\/} $s_b(M)$ ({\em $t$-state\/} $s_t(M)$) is 
the state of length $p$ obtained by reading off letters on the bottom (top, respectively) 
boundary edges of $M$ from right to left, 
and the {\em $l$-state\/} $s_l(M)$ ({\em $r$-state\/} $s_r(M)$) is 
the state of length $q$ on the left (right, respectively) 
boundary edges from top to bottom as shown in Figure~\ref{fig:arrow}.
State {\texttt a}{\texttt a}$\cdots${\texttt a} is called trivial.

\begin{figure}[h]
\includegraphics{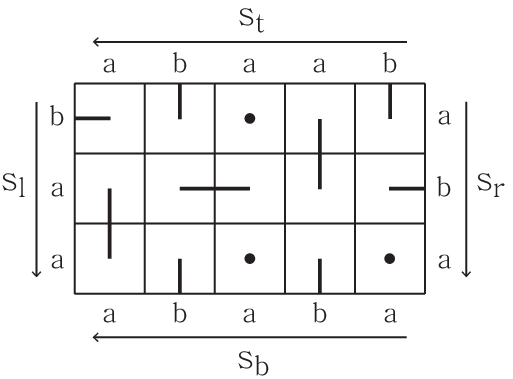}
\caption{A suitably adjacent $5 \! \times \! 3$-mosaic with four state indications:
$s_r(M) =$ \texttt{aba}, $s_b(M) =$ \texttt{ababa}, $s_t(M) =$ \texttt{baaba}, and $s_l(M) =$ \texttt{baa}}
\label{fig:arrow}
\end{figure}
 
Given a triple $\langle s_r, s_b, s_t \rangle$ of $r$-, $b$- and $t$-states,
we associate the {\em state polynomial\/}:
$$ P_{\langle s_r, s_b, s_t  \rangle}(v,x,y) = \sum k(n_v, n_x, n_y) \, v^{n_v} x^{n_x} y^{n_y}, $$
where $k(n_v, n_x, n_y)$ equals the number of all suitably adjacent $p \! \times \! q$-mosaics $M$,
having $n_v$, $n_x$, $n_y$ numbers of $T_1$, $T_2$, $T_4$ mosaic tiles, respectively,
such that $s_r(M) = s_r$, $s_b(M) = s_b$, $s_t(M) = s_t$ and trivial $s_l(M)=$ {\texttt a}{\texttt a}$\cdots${\texttt a}.
Mosaic tiles $T_1$, $T_2$, $T_4$ are respectively related to 
a monomer, an $x$-dimer's right part and a $y$-dimer's top part.
The last triviality condition of $s_l(M)$ is necessary for the left boundary state requirement.
See Figure~\ref{fig:polyexam} for an explicit example.

\begin{figure}[h]
\includegraphics{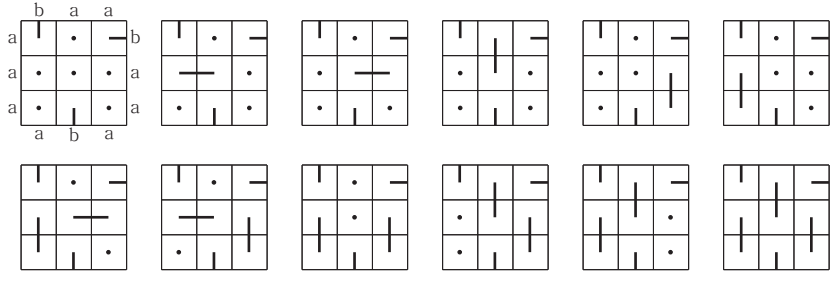}
\caption{Twelve suitably adjacent $3 \! \times \! 3$-mosaics producing
$P_{\langle \texttt{baa}, \texttt{aba}, \texttt{aab} \rangle}(v,x,y) = 
v^6 y + 2v^4 xy + 3v^4 y^2 + 2v^2 x y^2 + 3v^2 y^3 + y^4$}
\label{fig:polyexam}
\end{figure}

\subsection{Bar state matrices}

Consider suitably adjacent $p \! \times \! 1$-mosaics, which are called {\em bar mosaics\/}.
Bar mosaics of length $p$ have possibly $2^p$ kinds of $b$- and $t$-states, especially called {\em bar states\/}.
We arrange all bar states in the lexicographic order.
For $1 \leq i \leq 2^p$, let $\epsilon^p_i$ denote the $i$th bar state in this order.

{\em Bar state matrix\/} $X_p$ ($X = A, B$)
for the set of suitably adjacent bar mosaics of length $p$ is a $2^p \! \times \! 2^p$ matrix $(m_{ij})$ given by  
$$ m_{ij} = P_{\langle \text{x}, \epsilon^p_i, \epsilon^p_j \rangle}(v,x,y), $$
where x $=$ {\texttt a}, {\texttt b}, respectively.
We remark that information on suitably adjacent bar mosaics with trivial $l$-state is completely encoded 
in two bar state matrices $A_p$ and $B_p$.

\begin{lemma} \label{lem:bar}
Bar state matrices $A_p$ and $B_p$ are obtained by the recurrence relations:
$$ A_{k} = \begin{bmatrix} v A_{k-1} + x B_{k-1} & A_{k-1} \\ y A_{k-1} & \mathbb{O}_{k-1} \end{bmatrix}
\mbox{ and }
B_{k} = \begin{bmatrix} A_{k-1} & \mathbb{O}_{k-1} \\ \mathbb{O}_{k-1} & \mathbb{O}_{k-1} \end{bmatrix} $$
with seed matrices
$A_1 = \begin{bmatrix} v & 1 \\ y & 0 \end{bmatrix}$ and $B_1 = \begin{bmatrix} 1 & 0 \\ 0 & 0 \end{bmatrix}$.
\end{lemma}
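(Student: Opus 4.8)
The plan is to prove the recurrence by induction on $k$, interpreting each matrix entry combinatorially: the $(i,j)$-entry of $A_k$ (resp. $B_k$) counts, weighted by $v^{n_v}x^{n_x}y^{n_y}$, the suitably adjacent bar mosaics of length $k$ with $b$-state $\epsilon^k_i$, $t$-state $\epsilon^k_j$, trivial $l$-state, and $r$-state equal to \texttt{a} (resp. \texttt{b}). The base case $k=1$ is a direct enumeration of the five tiles: a length-$1$ bar mosaic is a single tile whose left edge must be labeled \texttt{a}; checking which of $T_1,\dots,T_5$ have left label \texttt{a} and reading off their bottom/top/right labels and weights gives exactly the stated $A_1$ and $B_1$.

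For the inductive step I would split a bar mosaic $M$ of length $k$ as (bar mosaic $M'$ of length $k-1$) $\cup$ (rightmost tile $T$), so that the $r$-state of $M'$ must match the left label of $T$, and the trivial $l$-state condition on $M$ is inherited by $M'$. The key bookkeeping is: the $b$- and $t$-states of $M$ are those of $M'$ with one extra letter appended on the right (coming from $T$), and in the lexicographic ordering of bar states of length $k$, appending \texttt{a} sends $\epsilon^{k-1}_i$ to $\epsilon^k_i$ while appending \texttt{b} sends it to $\epsilon^k_{i+2^{k-1}}$ — this is precisely what produces the $2\times 2$ block structure indexed by (left-half / right-half). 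Now I enumerate the admissible rightmost tiles $T$ by their left label: if $T$'s left edge is \texttt{a} then $M'$ has $r$-state \texttt{a}, contributing a block built from $A_{k-1}$; if $T$'s left edge is \texttt{b} then $M'$ has $r$-state \texttt{b}, contributing a block built from $B_{k-1}$. For each such $T$ one records its bottom label (which block row), top label (which block column), right label (whether the product lands in $A_k$ or $B_k$), and its weight contribution ($v$ for $T_1$, $x$ for $T_2$, $y$ for $T_4$, and $1$ for $T_3,T_5$). Collecting these cases should yield exactly the four blocks: $vA_{k-1}+xB_{k-1}$, $A_{k-1}$, $yA_{k-1}$, $\mathbb{O}_{k-1}$ for $A_k$, and $A_{k-1}$, $\mathbb{O}_{k-1}$, $\mathbb{O}_{k-1}$, $\mathbb{O}_{k-1}$ for $B_k$.

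The main obstacle, and the step deserving the most care, is verifying the tile-by-tile case analysis against the labeling convention in Figure~\ref{fig:tile} — specifically matching each of $T_1,\dots,T_5$ to the correct (left label, bottom label, top label, right label, weight) quintuple, and confirming that the tiles with a \texttt{b} on the left are exactly those that can legitimately continue a dimer from the tile to their left. In particular one must check that $T_2$ (an $x$-dimer's right part, left label \texttt{b}, carrying weight $x$) is the unique source of the $xB_{k-1}$ term, that the $y$-dimer tiles $T_4$ (top part, weight $y$) and $T_5$ (bottom part, weight $1$) contribute to the off-diagonal blocks via their bottom/top labels, and that no tile places a \texttt{b} on both the left edge and the right edge, which is what forces the lower-right block of $A_k$ and three of the four blocks of $B_k$ to vanish. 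Once the five tiles are tabulated correctly, the block identities are a routine matching of coefficients; the only genuinely delicate point is the lexicographic-ordering claim, which I would state and prove as a small separate observation before the induction.
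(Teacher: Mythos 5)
Your proposal is correct and follows essentially the same route as the paper: induction on $k$, peeling off the rightmost tile of a bar mosaic, using the fact that the new letter occupies the most significant position in the lexicographically ordered states to get the $2\times 2$ block structure, and tabulating the five tiles by their (left, bottom, top, right, weight) data exactly as the paper does in Table~\ref{tab:barset}.
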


Note that we may start with matrices
$A_0 = \begin{bmatrix} 1 \end{bmatrix}$ and $B_0 = \begin{bmatrix} 0 \end{bmatrix}$
instead of $A_1$ and $B_1$.

\begin{proof}
We use induction on $k$.
A straightforward observation on four mosaic tiles $T_1$, $T_3$, $T_4$ and $T_5$ 
establishes the lemma for $k=1$.
For example, $(2,1)$-entry of $A_1$ is 
$$ P_{\langle {\texttt a}, \epsilon^1_2, \epsilon^1_1 \rangle}(v,x,y) =
P_{\langle {\texttt a}, {\texttt b}, {\texttt a} \rangle}(v,x,y) = y $$
since only mosaic tile $T_4$ satisfies this requirement.

Assume that $A_{k-1}$ and $B_{k-1}$ satisfy the statement.
For one case, we consider $A_{k}$.
Partition this matrix of size $2^k \! \times \! 2^k$
into four block submatrices of size $2^{(k-1)} \! \times \! 2^{(k-1)}$, 
and consider the 11-submatrix of $A_{k}$,
i.e., the $(1,1)$-component in the $2 \! \times \! 2$ array of the four blocks.
The $(i,j)$-entry of this 11-submatrix is the state polynomial 
$P_{\langle {\texttt a}, {\texttt a}\epsilon^{k-1}_i, {\texttt a}\epsilon^{k-1}_j \rangle}(v,x,y)$
where {\texttt a}$\epsilon^{k-1}_i$ (similarly {\texttt a}$\epsilon^{k-1}_j$) is a bar state of length $k$
obtained by concatenating two states {\texttt a} and $\epsilon^{k-1}_i$.
A suitably adjacent $k \! \times \! 1$-mosaic corresponding to 
this triple $\langle {\texttt a}, {\texttt a}\epsilon^{k-1}_i, {\texttt a}\epsilon^{k-1}_j \rangle$
has two choices $T_1$ and $T_2$ for the rightmost mosaic tile,
and so its second rightmost tile must have $r$-state {\texttt a} or {\texttt b}, respectively, by the adjacency rule.
By considering the contribution of the rightmost tiles $T_1$ and $T_2$ to the state polynomial,
one easily gets
$$P_{\langle {\texttt a}, {\texttt a}\epsilon^{k-1}_i, {\texttt a}\epsilon^{k-1}_j \rangle}(v,x,y) =
v \big( (i,j)\text{-entry of } A_{k-1} \big) + x \big( (i,j)\text{-entry of } B_{k-1}\big).$$
Thus the 11-submatrix of $A_{k}$ is $v A_{k-1} + x B_{k-1}$.
See Figure~\ref{fig:barset}.

\begin{figure}[h]
\includegraphics{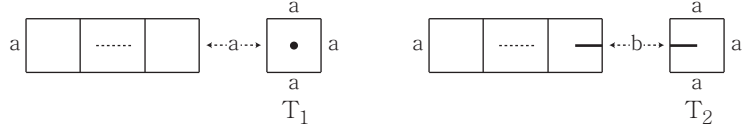}
\caption{Expanding a bar mosaic}
\label{fig:barset}
\end{figure}

All the other cases have no or unique choice for the rightmost mosaic tile and 
the same argument gives Table~\ref{tab:barset} presenting all possible eight cases as desired.
\end{proof}

\begin{table}[h]
\bgroup
\def\arraystretch{1.2} 
{\small
\begin{tabular}{cccc}      \hline \hline
  & \ {\em Submatrix for\/} $\langle s_r, s_b, s_t \rangle$ \ & \ {\em Rightmost tile\/} \ &
{\em Submatrix\/} \\    \hline
\multirow{4}{4mm}{$A_k$}
 & 11-submatrix $\langle {\texttt a}, {\texttt a} \! \cdot \! \cdot, {\texttt a} \! \cdot \! \cdot \rangle$ 
 & $T_1$, $T_2$ &  \ $v A_{k-1} + x B_{k-1}$ \ \\
 & 12-submatrix $\langle {\texttt a}, {\texttt a} \! \cdot \! \cdot, {\texttt b} \! \cdot \! \cdot \rangle$
 & $T_3$ & $A_{k-1}$ \\
 & 21-submatrix $\langle {\texttt a}, {\texttt b} \! \cdot \! \cdot, {\texttt a} \! \cdot \! \cdot \rangle$
 & $T_4$ & $y A_{k-1}$ \\
 & 22-submatrix $\langle {\texttt a}, {\texttt b} \! \cdot \! \cdot, {\texttt b} \! \cdot \! \cdot \rangle$
 & None & $\mathbb{O}_{k-1}$ \\    \hline
\multirow{4}{4mm}{$B_k$}
 & 11-submatrix $\langle {\texttt b}, {\texttt a} \! \cdot \! \cdot, {\texttt a} \! \cdot \! \cdot \rangle$
 & $T_5$ & $A_{k-1}$ \\
 & 12-submatrix $\langle {\texttt b}, {\texttt a} \! \cdot \! \cdot, {\texttt b} \! \cdot \! \cdot \rangle$
 & None & $\mathbb{O}_{k-1}$ \\
 & 21-submatrix $\langle {\texttt b}, {\texttt b} \! \cdot \! \cdot, {\texttt a} \! \cdot \! \cdot \rangle$
 & None & $\mathbb{O}_{k-1}$ \\
 & 22-submatrix $\langle {\texttt b}, {\texttt b} \! \cdot \! \cdot, {\texttt b} \! \cdot \! \cdot \rangle$
 & None & $\mathbb{O}_{k-1}$ \\  \hline \hline
\end{tabular}
}
\egroup
\vspace{4mm}
\caption{Eight submatrices of $A_k$ and $B_k$}
\label{tab:barset}
\end{table}

Remark that we may replace the recursive relation in Lemma~\ref{lem:bar} by
$$ A_{k} = A_{k-1} \otimes \begin{bmatrix} v & 1 \\ y & 0 \end{bmatrix} +
B_{k-1} \otimes \begin{bmatrix} x & 0 \\ 0 & 0 \end{bmatrix}
\mbox{ and }
B_{k} = A_{k-1} \otimes \begin{bmatrix} 1 & 0 \\ 0 & 0 \end{bmatrix} $$
in tensor product form.
This will be done by re-defining $b$- and $t$-states so as reading off $m$-tuple of states
on the bottom and top, respectively, boundary edges from left to right (the reverse direction).
Now follow the same argument as in the above proof.

\subsection{State matrices}

{\em State matrix\/} $A_{m \times q}$ for the set of suitably adjacent $m \! \times \! q$-mosaics 
is a $2^m \! \times \! 2^m$ matrix $(a_{ij})$ given by 
$$ a_{ij} = P_{\langle {\texttt a} \cdots {\texttt a}, \epsilon^m_i, \epsilon^m_j \rangle}(v,x,y). $$
The trivial state condition of $s_r$ is necessary for the right boundary state requirement.
We get state matrix $A_{m \times n}$ by simply multiplying the bar state matrix $n$ times.

\begin{lemma} \label{lem:mn}
State matrix $A_{m \times n}$ is obtained by
$$ A_{m \times n} = (A_m)^n. $$
\end{lemma}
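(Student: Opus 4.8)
\emph{Proof proposal.} The plan is to prove the slightly more flexible statement $A_{m \times q} = (A_m)^q$ for every $q \ge 1$ by induction on $q$, the key point being that vertically stacking mosaics corresponds exactly to multiplying state matrices. For the base case $q = 1$, the state matrix $A_{m \times 1}$ is by definition the $2^m \! \times \! 2^m$ matrix with $(i,j)$-entry $P_{\langle \texttt{a}\cdots\texttt{a}, \epsilon^m_i, \epsilon^m_j \rangle}(v,x,y)$; since a state of length $1$ is trivial precisely when it is the single letter $\texttt{a}$, the $r$-state index $\texttt{a}\cdots\texttt{a}$ reduces to $\texttt{a}$, and this matrix is exactly the bar state matrix $A_m$ of Lemma~\ref{lem:bar}.

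For the inductive step, assume $A_{m \times (q-1)} = (A_m)^{q-1}$. Given a suitably adjacent $m \! \times \! q$-mosaic $M$ with trivial $l$- and $r$-states, split it along the horizontal line between its $(q-1)$st and $q$th rows into the bottom sub-mosaic $M'$ (an $m \! \times \! (q-1)$-mosaic) and the top row $M''$ (a bar mosaic of length $m$). I would record three facts. First, this splitting is a bijection onto pairs $(M', M'')$ such that $M'$ is a suitably adjacent $m \! \times \! (q-1)$-mosaic, $M''$ is a bar mosaic of length $m$, and $s_t(M') = s_b(M'')$ — this last equality is exactly the adjacency rule along the cut, with both states read in the same direction. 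Second, the $l$-state (resp. $r$-state) of $M$ is the concatenation of those of $M'$ and $M''$, so $M$ has trivial $l$- and $r$-states if and only if both $M'$ and $M''$ do. Third, the numbers $n_v, n_x, n_y$ of $T_1, T_2, T_4$ tiles in $M$ are the sums of the corresponding counts for $M'$ and $M''$, so the monomial contributed by $M$ is the product of those of $M'$ and $M''$. Combining these, fixing $s_b(M) = \epsilon^m_i$ and $s_t(M) = \epsilon^m_j$ and summing over the intermediate state $\epsilon^m_k = s_t(M') = s_b(M'')$, which runs over all $2^m$ bar states, yields
\[
P_{\langle \texttt{a}\cdots\texttt{a}, \epsilon^m_i, \epsilon^m_j \rangle}(v,x,y)
= \sum_{k=1}^{2^m} P_{\langle \texttt{a}\cdots\texttt{a}, \epsilon^m_i, \epsilon^m_k \rangle}(v,x,y) \cdot P_{\langle \texttt{a}, \epsilon^m_k, \epsilon^m_j \rangle}(v,x,y),
\]
where the first factor on the right is the $(i,k)$-entry of $A_{m \times (q-1)}$ and the second is the $(k,j)$-entry of $A_m = A_{m \times 1}$. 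This is precisely the entrywise rule for the matrix product, so $A_{m \times q} = A_{m \times (q-1)} \cdot A_m = (A_m)^{q-1} A_m = (A_m)^q$, which closes the induction; setting $q = n$ gives the lemma.

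The step I expect to require the most care is the verification that the splitting is a genuine bijection respecting all three structures at once — in particular that the left/right boundary state requirements of $M$ are both inherited by and reconstructible from the two pieces, and that the reading conventions for $s_t(M')$ and $s_b(M'')$ are set up so that "abutting edges carry the same letter'' becomes literally the equality of the indices being summed. Once this bookkeeping is pinned down, the rest is a direct transcription of matrix multiplication.
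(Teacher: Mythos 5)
Your proposal is correct and follows essentially the same route as the paper: induction on the number of rows, splitting off the top bar mosaic, using the adjacency rule to identify the intermediate state, and observing that multiplicativity of the monomials turns the sum over intermediate states into the entrywise rule for matrix multiplication. The extra care you flag about reading conventions and boundary-state bookkeeping is handled implicitly in the paper, but your identity $a_{ij} = \sum_k a'_{ik} a''_{kj}$ is exactly the one the paper derives.
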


\begin{proof}
We use induction on $n$.
For $n=1$, $A_{m \times 1} = A_m$
since $A_{m \times 1}$ counts suitably adjacent $m \! \times \! 1$-mosaics with trivial $r$-state {\texttt a}.

Assume that $A_{m \times (k-1)} = (A_m)^{k-1}$.
Let $M^{m \times k}$ be a suitably adjacent $m \! \times \! k$-mosaic with trivial $l$- and $r$-states.
Also let $M^{m \times (k-1)}$ and $M^{m \times 1}$ be the suitably adjacent 
$m \! \times \! (k \! - \! 1)$- and $m \! \times \! 1$-mosaics
by splitting bottom $k \! - \! 1$ bar mosaics and the top bar mosaic.
By the adjacency rule,
the $t$-state of $M^{m \times (k-1)}$ and the $b$-state of $M^{m \times 1}$
must coincide as shown in Figure~\ref{fig:expand}.

\begin{figure}[h]
\includegraphics{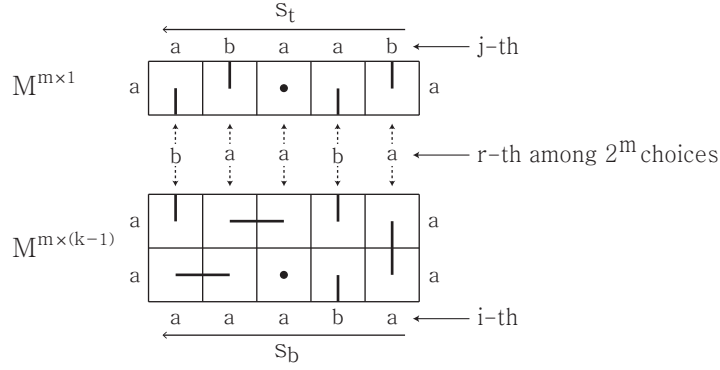}
\caption{Expanding $M^{m \times (k-1)}$ to $M^{m \times k}$}
\label{fig:expand}
\end{figure}

Let $A_{m \times k} = (a_{ij})$, $A_{m \times (k-1)} = (a_{ij}')$ and $A_{m \times 1} = (a_{ij}'')$.
Note that $a_{ij}$ is the state polynomial for the set of suitably adjacent $m \! \times \! k$-mosaics
$M$ which admit splittings into $M^{m \times (k-1)}$ and $M^{m \times 1}$ satisfying
$s_b(M) = s_b(M^{m \times (k-1)}) = \epsilon^m_i$,
$s_t(M) = s_t(M^{m \times 1}) = \epsilon^m_j$, and
$s_t(M^{m \times (k-1)}) = s_b(M^{m \times 1}) = \epsilon^m_r$ ($1 \leq r \leq 2^m$).
Obviously, all $l$- and $r$-states of them must be trivial.
Thus, 
$$ a_{ij} = \sum^{2^m}_{r=1} a_{ir}'  \cdot  a_{rj}''. $$
This implies
$$ A_{m \times k} = A_{m \times (k-1)} \cdot A_{m \times 1} = (A_m)^k, $$
and the induction step is finished
\end{proof}

\section{Stage 3. Analyzing the state matrix} \label{sec:stage3}

We analyze state matrix $A_{m \times n} = (A_m)^n$ to find the partition function $G_{m \times n}(v,x,y)$.
  
\begin{proof}[Proof of Theorem~\ref{thm:dimer}.]
The $(1,1)$-entry of $A_{m \times n}$ is the state polynomial for the set of 
suitably adjacent $m \! \times \! n$-mosaics associated to the triple 
$$ \langle {\texttt a} \cdots {\texttt a}, \epsilon^m_1, \epsilon^m_1 \rangle =
\langle {\texttt a} \cdots {\texttt a}, {\texttt a} \cdots {\texttt a}, {\texttt a} \cdots {\texttt a} \rangle, $$
so having trivial $r$-, $b$-, $t$- and $l$-states.
According to the boundary state requirement,
monomer--dimer coverings in $\mathbb{Z}_{m \times n}$ are converted into 
suitably adjacent $m \! \times \! n$-mosaics $M$ with trivial $r$-, $b$-, $t$- and $l$-states
as the left picture in Figure~\ref{fig:analyze}.
Thus this state polynomial represents the partition function $G_{m \times n}(v,x,y)$.
In short, we get 
$$G_{m \times n}(v,x,y) = \mbox{(1,1)-entry of } A_{m \times n}.$$
This combined with Lemmas~\ref{lem:bar} and \ref{lem:mn} completes the proof.
Note that the two recurrence relations in Lemma~\ref{lem:bar}
easily merge into one recurrence relation as in Theorem~\ref{thm:dimer}.
\end{proof}

\begin{figure}[h]
\includegraphics{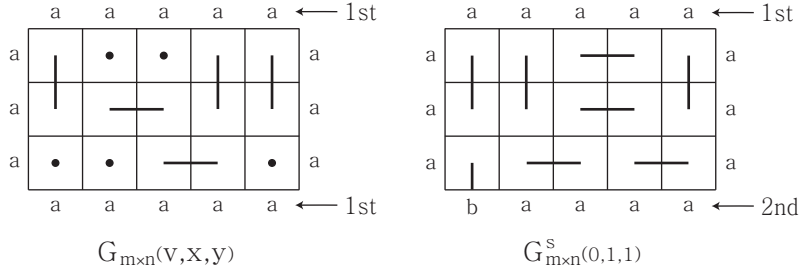}
\caption{Examples of monomer--dimer coverings related to $(1,1)$- and $(2,1)$-entries of $A_{m \times n}$}
\label{fig:analyze}
\end{figure}

\begin{proof}[Proof of Theorem~\ref{thm:single}.]
$G^s_{m \times n}(v,x,y)$ which is the $(2,1)$-entry of $(A_m)^n$ is the state polynomial 
associated  to the triple $\langle {\texttt a} \cdots {\texttt a}, \epsilon^m_2, \epsilon^m_1 \rangle$,
so having the second $b$-state and trivial $t$-, $l$- and $r$-states.
Since the second $b$-state is {\texttt a}{\texttt a}$\cdots${\texttt a}{\texttt b},
$M_{1,1}$ must be mosaic tile $T_4$ and we may consider it as a fixed single monomer.
Now $G^s_{m \times n}(0,1,1)$ is the number of pure dimer coverings
with a single monomer at $M_{1,1}$, as desired.
It is well-known that this number is independent of location of the monomer,
provided that it places at boundary sites with odd-numbered $x$- and $y$-coordinates~\cite{TW}.
Therefore, instead of the (2,1)-entry,
we may use any $(i,j)$-entry of $(A_m)^n$ for $\{i,j\} = \{1,2^k \! + \! 1\}$ and $k=0,2,4,\dots, m \! - \! 1$.
\end{proof}

\section{Growth constant of the Hosoya index} \label{sec:growth}

We will need the following result called Fekete's lemma 
whose consequences are many and deep.
In this paper we state and prove its two-variate multiplicative version with generalization.

\begin{lemma}[Generalized Fekete's Lemma] \label{lem:Fekete}
Let $\{ a_{m,n} \}_{m, \, n \in \, \mathbb{N}}$ be a double sequence with $a_{m,n} \geq 1$,
and $k$ be a nonnegative integer.

If the sequence satisfies
$a_{m_1,n} \cdot a_{m_2,n} \leq a_{m_1 + m_2 + k,n}$
and $a_{m,n_1} \cdot a_{m,n_2} \leq a_{m,n_1 + n_2 + k}$
for all $m$, $m_1$, $m_2$, $n$, $n_1$ and $n_2$,
then
$$ \lim_{m, n \rightarrow \infty} (a_{m,n})^{\frac{1}{mn}} = 
\sup_{m, n \geq 1} (a_{m,n})^{\frac{1}{(m+k)(n+k)}}, $$
provided that the supremum exists.

Instead, if it satisfies
$a_{m_1+m_2,n} \leq a_{m_1+k,n} \cdot a_{m_2,n}$ 
and $a_{m,n_1+n_2} \leq a_{m,n_1+k} \cdot a_{m,n_2}$,
then
$$ \lim_{m, n \rightarrow \infty} (a_{m,n})^{\frac{1}{mn}} = 
\inf_{m, n > k} (a_{m,n})^{\frac{1}{(m-k)(n-k)}}. $$
\end{lemma}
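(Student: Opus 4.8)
The plan is to reduce the two-variate statement to the classical one-variate Fekete lemma applied twice, first in the $m$-direction and then in the $n$-direction, with the additive shift $k$ absorbed by a change of variables. Throughout I work with the multiplicative form; taking logarithms turns the hypotheses into the familiar (super/sub)additivity relations, but it is cleaner to stay multiplicative and quote the multiplicative Fekete lemma: if $b_1, b_2, \dots \geq 1$ satisfy $b_{p}\,b_{q} \leq b_{p+q}$ then $\lim_p b_p^{1/p} = \sup_p b_p^{1/p}$ (allowing $+\infty$), and dually for the submultiplicative case with $\inf$.

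First I would treat the supermultiplicative case. Fix $n$ and set $c_m = a_{m,n}$; the hypothesis $a_{m_1,n}\,a_{m_2,n} \leq a_{m_1+m_2+k,n}$ says exactly that $d_m := c_{m-k} = a_{m-k,n}$ is supermultiplicative in $m$ for $m > k$ (indeed $d_{m_1}d_{m_2} = a_{m_1-k,n}a_{m_2-k,n} \le a_{m_1+m_2-k,n} = d_{m_1+m_2}$). By the one-variate Fekete lemma applied to $(d_m)_{m>k}$, the limit $\lim_{m\to\infty} d_m^{1/m}$ exists and equals $\sup_{m>k} d_m^{1/m} = \sup_{m \ge 1} a_{m,n}^{1/(m+k)}$; equivalently $\lim_{m\to\infty} a_{m,n}^{1/(m+k)} = \sup_{m\ge 1} a_{m,n}^{1/(m+k)}$, and since $a_{m,n}^{1/m}$ and $a_{m,n}^{1/(m+k)}$ have the same limit as $m\to\infty$ (their ratio of exponents tends to $1$ and $a_{m,n}\ge 1$), we get $\lim_{m\to\infty} a_{m,n}^{1/m}$ exists for each fixed $n$; call it $\alpha_n$. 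The second hypothesis is symmetric in the roles of $m$ and $n$, so the same argument shows $\lim_{n\to\infty} a_{m,n}^{1/n}$ exists for each fixed $m$. The remaining work is to combine these into the genuine double limit: I would show $\alpha_n^{1/n}$ itself is, after the shift, supermultiplicative in $n$ — from $a_{m,n_1}a_{m,n_2}\le a_{m,n_1+n_2+k}$, raising to the $1/m$ power and letting $m\to\infty$ gives $\alpha_{n_1}\alpha_{n_2} \le \alpha_{n_1+n_2+k}$ — and then apply the one-variate lemma once more to $(\alpha_n)$ to obtain $\lim_{n\to\infty}\alpha_n^{1/n} = \sup_{n\ge 1}\alpha_n^{1/(n+k)}$. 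Unwinding the two suprema, $\sup_n\alpha_n^{1/(n+k)} = \sup_n \bigl(\sup_m a_{m,n}^{1/(m+k)}\bigr)^{1/(n+k)} = \sup_{m,n} a_{m,n}^{1/((m+k)(n+k))}$, which is the claimed value. Finally, to see that the iterated limit $\lim_n \lim_m a_{m,n}^{1/(mn)}$ coincides with the true double limit $\lim_{m,n\to\infty} a_{m,n}^{1/(mn)}$, I would use monotonicity: since $a_{m,n}^{1/((m+k)(n+k))}$ is nondecreasing along a cofinal sequence obtained by concatenation (the supermultiplicativity gives $a_{m,n} \le a_{\ell m + (\ell-1)k,\,n}$-type bounds), a standard squeeze — bounding $a_{m,n}^{1/(mn)}$ between $a_{m_0,n_0}^{1/((m_0+k)(n_0+k))}-\varepsilon$ and the supremum for $m,n$ large — pins the double limit to the supremum. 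The subadditive case is entirely parallel: set $d_m = a_{m+k,n}$, observe $d_{m_1+m_2} = a_{m_1+m_2+k,n} \ge \ldots$ wait, rather $a_{m_1+m_2,n}\le a_{m_1+k,n}a_{m_2,n}$ makes $e_m := a_{m,n}$ with the shift built into one factor submultiplicative after setting $f_m = a_{m,n}$ and noting $f_{m_1+m_2} \le f_{m_1+k}f_{m_2}$, so $g_m := \sup_{0\le r<k+?}\ldots$; concretely one checks $h_m := a_{m+k,n}/($constant$)$ is submultiplicative and applies the $\inf$ version, yielding $\inf_{m>k} a_{m,n}^{1/(m-k)}$, then iterates in $n$.

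The main obstacle I expect is the passage between the two iterated limits and the honest double limit $\lim_{m,n\to\infty}$: the one-variate Fekete lemma only delivers convergence in each variable separately, and one must rule out bad behavior when $m$ and $n$ go to infinity along unrelated rates. The clean way around this is to exploit that the shifted sequence $(m,n)\mapsto a_{m,n}^{1/((m+k)(n+k))}$ is monotone nondecreasing under the concatenation operations guaranteed by the hypotheses in \emph{both} directions simultaneously (in the supermultiplicative case), so for any target $(M,N)$ large and any fixed reference $(m_0,n_0)$, tiling an $(M)\times(N)$ array with translates of an $(m_0)\times(n_0)$ block (absorbing the $k$-overhangs, which cost a bounded factor per block and hence an exponentially negligible correction to the $1/(MN)$-th root) shows $a_{M,N}^{1/(MN)} \ge a_{m_0,n_0}^{1/((m_0+k)(n_0+k))} - o(1)$; combined with the trivial upper bound $a_{M,N}^{1/(MN)} \le \sup a_{m,n}^{1/((m+k)(n+k))}$, this forces convergence to the supremum. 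This is the same device as the standard proof of Fekete's lemma, just run in two dimensions, so no new idea is needed — only careful bookkeeping of the overhang factors, which I would state once and reuse for both cases.
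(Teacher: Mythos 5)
Your argument is correct in substance, and its decisive step is the paper's own proof: the paper does not pass through iterated one-variable limits at all, but attacks the double limit directly by fixing a reference pair $(i,j)$ with $(a_{i,j})^{1/((i+k)(j+k))}$ close to the supremum $S$, writing $m=p_m(i+k)+q_m$ and $n=p_n(j+k)+q_n$ by the division algorithm, and chaining the supermultiplicative inequality in each coordinate to get $a_{m,n}\ge (a_{i,n})^{p_m}\ge (a_{i,j})^{p_m p_n}$, whence $\liminf_{m,n}(a_{m,n})^{1/(mn)}\ge (a_{i,j})^{1/((i+k)(j+k))}$ --- exactly your final ``tiling'' paragraph. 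Your preliminary reduction to two applications of the one-variable Fekete lemma is sound but redundant: it only produces the iterated limit, and, as you yourself diagnose, you still need the two-dimensional block argument to control the genuine double limit, at which point the one-variable scaffolding buys nothing. Two points to tighten. First, the ``trivial upper bound'' $a_{M,N}^{1/(MN)}\le S$ is not literally true when $k>0$, since $a_{M,N}^{1/(MN)}\ge a_{M,N}^{1/((M+k)(N+k))}$ because $a\ge 1$; you need $a_{M,N}^{1/(MN)}\le S^{(M+k)(N+k)/(MN)}\rightarrow S$ (the paper is equally terse here). Second, your submultiplicative sketch as written is not a proof --- it trails off with placeholders --- but it is easily repaired: $e_m:=a_{m+k,n}$ satisfies $e_{m_1+m_2}=a_{m_1+(m_2+k),n}\le a_{m_1+k,n}\,a_{m_2+k,n}=e_{m_1}e_{m_2}$ exactly, so no normalizing constant is needed before applying the $\inf$ version; the paper likewise dispatches this half with ``can be proved in similar way.''
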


Remark that in this paper we only use the supermultiplicative inequality part with $k=0$.
The other parts will be used in on-going papers.

\begin{proof}
Let $S = \sup_{m, n} (a_{m,n})^{\frac{1}{(m+k)(n+k)}}$ and let $B$ be any number less than~$S$.
Choose any positive integers $i$ and $j$ satisfying $ B < (a_{i,j})^{\frac{1}{(i+k)(j+k)}}$.
For sufficiently large integers $m$ and $n$, 
there are integers $p_m$ and $q_m$ (simillary $p_n$ and $q_n$ for $n$ and $j$)
such that $m = p_m (i \! + \! k) + q_m$ and $0 \leq q_m < i \! + \! k$ by the division algorithm.
By the supermultiplicative inequalities prescribed in the lemma,
$$ (a_{m,n})^{\frac{1}{mn}} \geq (a_{i,n})^{\frac{p_m}{mn}} \geq (a_{i,j})^{\frac{p_m p_n}{mn}} = 
(a_{i,j})^{\frac{1}{(i+k)(j+k)} \Big(\frac{p_m (i+k)}{m}\Big) \Big(\frac{p_n (j+k)}{n}\Big)}. $$
Since $\frac{p_m (i+k)}{m}, \frac{p_n (j+k)}{n} \rightarrow 1$ as $m, n \rightarrow \infty$,
we have
$$ B < (a_{i,j})^{\frac{1}{(i+k)(j+k)}} \leq \lim_{m, \, n \rightarrow \infty} (a_{m,n})^{\frac{1}{mn}} \leq S. $$
This provides the desired limit.
The submultiplicative inequality part of the proof can be proved in similar way.
\end{proof}

\begin{proof}[Proof of Theorem~\ref{thm:growth}.]
We use briefly $G_{m \times n}$ to denote $G_{m \times n}(1,1,1)$
which is obviously at least 1 for all $m$, $n$.
First, we prove the existence of the limit of $(G_{m \times n})^{\frac{1}{mn}}$.
The supermultiplicative inequalities
$G_{m_1 \times n} \cdot G_{m_2 \times n} \leq G_{(m_1+m_2) \times n}$ and similarly
$G_{m \times n_1} \cdot G_{m \times n_2} \leq G_{m \times (n_1+n_2)}$
are obvious because we can create a new monomer--dimer $(m_1 \! + \! m_2) \! \times \! n$-mosaic
by simply adjoining two monomer--dimer $m_1 \! \times \! n$- and $m_2 \! \times \! n$-mosaics.
Since $\sup_{m, \, n} (G_{m \times n})^{\frac{1}{mn}} \leq 5$
which is the number of possible mosaic tiles at each site,
we apply Lemma~\ref{lem:Fekete}.
\end{proof}

\section{Fixed monomers problem} \label{sec:fixed}

\begin{proof}[Proof of Theorem~\ref{thm:fixed}.]
Let $S$ be a fixed monomer set.
We find bar state matrix $A_{m,i}$ for some $i$th bar mosaic with fixed monomers by using
relevant bar state matrix recurrence relations similar to Lemma~\ref{lem:bar}
with some modifications in each step $(k,i)$ as below.
We may assume that $\{(p,i), \dots, (q,i) \}$, $1 \leq p \leq q \leq m$, is a subset of maximal consecutive vertices
in $S$ on the $i$th bar mosaic in the sense that two vertices $(p \! - \! 1,i)$ and $(q \! + \! 1,i)$
(if they exist) are not contained in $S$.
See Figure~\ref{fig:fixed2}.
Note that, in this case, we only consider the number of monomer--dimer coverings instead of the partition function.

\begin{figure}[h]
\includegraphics{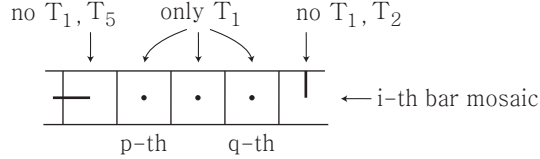}
\caption{A set of maximal consecutive vertices of $S$}
\label{fig:fixed2}
\end{figure}

If $(k \! - \! 1,i)$, $(k,i)$ and $(k \! + \! 1,i)$ are not contained in $S$,
\begin{equation}
A_{k,i} = \begin{bmatrix} B_{k-1,i} & A_{k-1,i} \\
A_{k-1,i} & \mathbb{O}_{k-1} \end{bmatrix} \mbox{ and }
B_{k,i} = \begin{bmatrix} A_{k-1,i} & \mathbb{O}_{k-1} \\
\mathbb{O}_{k-1} & \mathbb{O}_{k-1} \end{bmatrix}    
\end{equation}
because $T_1$ cannot be used in this step.
Also if $k=p, \dots, q$, i.e., $(k,i)$ is contained in $S$,
$$ A_{k,i} = \begin{bmatrix} A_{k-1,i} & \mathbb{O}_{k-1} \\
\mathbb{O}_{k-1} & \mathbb{O}_{k-1} \end{bmatrix} \mbox{ and }
B_{k,i} = \mathbb{O}_{k} $$
because only $T_1$ can be located at $(k,i)$.

In the remaining cases of $k=p \! - \! 1$ or $q \! + \! 1$,
$$ A_{p-1,i} = \begin{bmatrix} B_{p-2,i} & A_{p-2,i} \\
A_{p-2,i} & \mathbb{O}_{p-2} \end{bmatrix} \mbox{ and }
B_{p-1,i} = \mathbb{O}_{p-1} $$
because $T_1$ and $T_5$ cannot be located at $(p \! - \! 1,i)$
and so the 11-submatrix of $B_{p-1,i}$ is $\mathbb{O}_{p-2}$ instead of $A_{p-2,i}$,
and 
$$ A_{q+1,i} = \begin{bmatrix} \mathbb{O}_{q} & A_{q,i} \\
A_{q,i} & \mathbb{O}_{q} \end{bmatrix} \mbox{ and }
B_{q+1,i} = \begin{bmatrix} A_{q,i} & \mathbb{O}_{q} \\
\mathbb{O}_{q} & \mathbb{O}_{q} \end{bmatrix} $$
because $T_1$ and $T_2$ cannot be located at $(q \! + \! 1,i)$.
Indeed, the equations in these remaining cases can be replaced by Eq.~(1)
because $A_{p,i}$ and $B_{p,i}$ do not use $B_{p-1,i}$ and
the 11-submatrix of $A_{q+1,i}$ is $\mathbb{O}_{q}$ which is equal to $B_{q,i}$.

Therefore all of these recurrence relations eventually merge into 
the recurrence relations in Theorem~\ref{thm:fixed}.
By applying the rest of the state matrix recursion method,
we conclude that
$$ g_{m \times n}(S) = \mbox{(1,1)-entry of } \prod^n_{i=1} A_{m,i}, $$
which completes the proof.
\end{proof}

\section{Domino tilings in the Aztec octagon} \label{sec:domino}

An Aztec diamond of order $n$ consists of all lattice squares 
that lie completely inside the diamond shaped region $\{ (x,y) : |x|+|y| \leq n+1 \}$. 
An augmented Aztec diamond of order $n$ looks much like the Aztec diamond of order $n$,
except that there are three long columns in the middle instead of two.
A {\em domino\/} is a 1-by-2 or 2-by-1 rectangle.
There are exact enumerations of domino tilings of these two regions 
and dozens of interesting variants as stated in the introduction.

In this section, we study the domino tilings on the most extended version of the Aztec diamond.
An $m \! \times \! n$-{\em Aztec octagon\/} of order $(p,q,r,s)$, denoted by $\mathbb{A}_{m \times n}(p,q,r,s)$,
is defined as the union of  
$mn -\frac{1}{2}(p^2 \! - \! p+ \! q^2 \! - \! q \! + \! r^2 \! - \! r \! + \! s^2 \! - \! s)$ unit squares,
arranged in the $m \! \times \! n$ rectangular grid with four triangular corners 
with side lengths $p \! - \! 1$, $q \! - \! 1$, $r \! - \! 1$, $s \! - \! 1$ removed in clockwise order, 
as drawn in Figure~\ref{fig:AztecOctagon}.
Aztec diamond and augmented Aztec diamond of order $n$
can be represented as $\mathbb{A}_{2n \times 2n}(n,n,n,n)$ 
and $\mathbb{A}_{(2n+1) \times 2n}(n,n,n,n)$, respectively, and
the $m \! \times \! n$ rectangular region $\mathbb{Z}_{m \times n}$
is indeed $\mathbb{A}_{m \times n}(1,1,1,1)$.

\begin{figure}[h]
\includegraphics{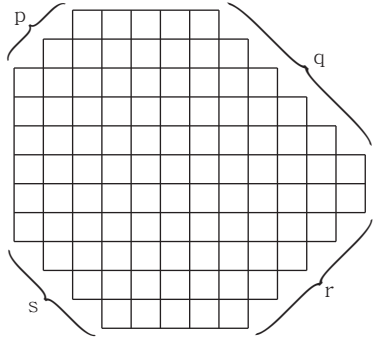}
\caption{$\mathbb{A}_{12 \times 11}(3,6,5,4)$}
\label{fig:AztecOctagon}
\end{figure}

\begin{figure}[h]
\includegraphics{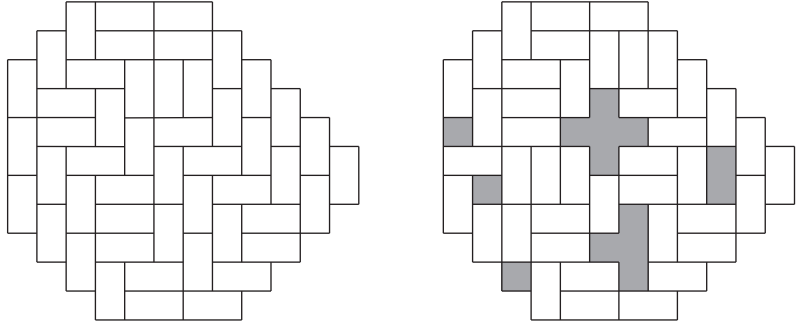}
\caption{Domino tilings of Aztec octagons without/with holes}
\label{fig:domino}
\end{figure}

Let $\alpha_{m \times n}(p,q,r,s)$ denote the number of domino tilings of $\mathbb{A}_{m \times n}(p,q,r,s)$.

\begin{theorem} \label{thm:aztec}
For domino tilings of an Aztec octagon $\mathbb{A}_{m \times n}(p,q,r,s)$, 
$$ \alpha_{m \times n}(p,q,r,s) = (b_m(r,s),b_m(q,p))\mbox{-entry of } (A_m)^n, $$
where $A_m$ is the $2^m \! \times \! 2^m$ matrix recursively defined by 
$$ A_{k} = \begin{bmatrix} 
\Small{\begin{bmatrix} A_{k-2} & \mathbb{O}_{k-2} \\ \mathbb{O}_{k-2} & \mathbb{O}_{k-2} \end{bmatrix}}
& A_{k-1} \\ A_{k-1} & \mathbb{O}_{k-1} \end{bmatrix} $$
for $k=2, \dots, m$, with seed matrices
$A_0 = \begin{bmatrix} 1 \end{bmatrix}$ and \
$A_1 = \begin{bmatrix} 0 & 1 \\ 1 & 0 \end{bmatrix}$.
Here 
$$ b_m(r,s) = \frac{2}{3} (2^{m-r+2[\frac{r}{2}]} - 2^{m-r}) + \frac{1}{3} (2^{s} - 2^{s-2[\frac{s}{2}]}) + 1. $$
\end{theorem}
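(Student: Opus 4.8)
First I would observe that the matrix $A_m$ of Theorem~\ref{thm:aztec} is precisely the matrix of Theorem~\ref{thm:dimer} --- equivalently, the bar state matrix of Lemma~\ref{lem:bar} --- after the substitution $v=0$ and $x=y=1$. Hence, by Lemmas~\ref{lem:bar} and~\ref{lem:mn}, for every pair of indices $i,j$ the $(i,j)$-entry of $(A_m)^n$ is the number (each counted once) of suitably adjacent $m \times n$-mosaics built only from the dimer tiles $T_2,T_3,T_4,T_5$ --- the monomer tile $T_1$ is suppressed because $v=0$ --- whose $b$-state is $\epsilon^m_i$, whose $t$-state is $\epsilon^m_j$, and whose $l$- and $r$-states are trivial. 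So Theorem~\ref{thm:aztec} is equivalent to producing a bijection between domino tilings of $\mathbb{A}_{m \times n}(p,q,r,s)$ and mosaics of this kind with $i = b_m(r,s)$ and $j = b_m(q,p)$.

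To construct the bijection I would place $\mathbb{A}_{m \times n}(p,q,r,s)$ inside the $m \times n$ grid with the removed triangular corners of side lengths $p-1$, $q-1$, $r-1$, $s-1$ in the top-left, top-right, bottom-right and bottom-left corners respectively, assuming as in Figure~\ref{fig:AztecOctagon} that they are pairwise disjoint. Converting a domino tiling to mosaic tiles on the octagon cells (as in Stage~1) labels every edge along the jagged boundary with \texttt{a}, since no domino leaves the octagon. The crux is the claim that the cells of the four removed corners admit a \emph{unique} completion to a suitably adjacent mosaic with trivial $l$- and $r$-states, and that in this completion each column of a removed corner is filled by a vertical stack of dimers ($T_3$ under $T_4$) together with, exactly when that column has odd height, one extra $T_4$ at the bottom of a \emph{bottom} corner (one extra $T_3$ at the top of a \emph{top} corner), the extra tile placing a \texttt{b} on the corresponding outer boundary edge. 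I would prove this by constraint propagation, treating each corner column by column starting from the column lying on the grid boundary: the topmost removed cell of such a column has its top and right edges facing the octagon (both \texttt{a}) and its remaining vertical edge forced to \texttt{a} (it is on the grid boundary, or it is inherited from the already filled neighbouring column, all of whose cells carry \texttt{a} on their vertical edges), so that cell must be $T_4$, which then forces the entire vertical-dimer stack below it; moreover any horizontal dimer placed anywhere in a corner quickly produces a cell with all four edges labeled \texttt{a}, which no dimer tile permits. Running the same propagation in reverse shows that every mosaic of the type counted above restricts on the corner regions to exactly this fill, so that deleting the fill recovers a domino tiling of the octagon; this gives the inverse map.

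Finally I would read off the two boundary states of the completed mosaic. In the bottom-left corner, column $j$ has height $s-j$, so it contributes a \texttt{b} to the $b$-state exactly for $j \in \{1,\dots,s-1\}$ with $j \equiv s-1 \pmod 2$; reading the $b$-state from right to left and using the lexicographic correspondence $\epsilon^m_k \leftrightarrow k-1$ in binary, this set of \texttt{b}'s adds $\frac13(2^{s} - 2^{\,s - 2[s/2]})$ to $b_m(r,s) - 1$, while the bottom-right corner of side $r-1$ symmetrically adds $\frac23(2^{\,m - r + 2[r/2]} - 2^{m-r})$; disjointness of the corners makes these two bit-ranges non-overlapping, so they sum to the claimed index $b_m(r,s)$, and the top-left and top-right corners yield the $t$-state index $b_m(q,p)$ in the same way. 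Together with Lemma~\ref{lem:mn} this gives $\alpha_{m \times n}(p,q,r,s) = (b_m(r,s), b_m(q,p))$-entry of $(A_m)^n$. The step I expect to be the main obstacle is the unique-completion claim, which has to be argued in both directions so that the correspondence neither creates nor destroys tilings, followed by the binary bookkeeping converting ``\texttt{b}'s at corner columns of a prescribed parity'' into the closed form $b_m$ --- here one must keep the right-to-left reading of states and the clockwise roles of $p,q,r,s$ straight throughout.
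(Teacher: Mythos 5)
Your proposal is correct and follows essentially the same route as the paper: convert to the mosaic system with $v=0$, $x=y=1$, observe that the four triangular corners admit a unique (all-vertical) forced filling whose odd-height columns deposit the alternating \texttt{b}'s on the bottom and top boundary states, and then compute the lexicographic indices of those states to obtain $b_m(r,s)$ and $b_m(q,p)$. If anything, you are more explicit than the paper about the unique-completion claim (the paper simply appeals to its Figure~14) and about the need to verify the bijection in both directions, and your binary bookkeeping for $b_m$ matches the paper's computation exactly.
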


\begin{proof}
It is worthwhile mentioning that the enumeration of domino tilings of this special case 
$\mathbb{A}_{m \times n}(1,1,1,1)$ is answered as $G_{m \times n}(0,1,1)$ in Theorem~\ref{thm:dimer}.

Domino tilings of a region (or equivalently, pure dimer coverings on the related square lattice) 
is known to be very sensitive to its boundary condition.
As an evidence, if it has a non-trivial boundary state 
in some part of which letters {\texttt a} and {\texttt b} appear in turn as in Figure~\ref{fig:boundary},
then not only boundary but some interior squares must be covered by dominos in the unique way
as the shaded region in the figure.

\begin{figure}[h]
\includegraphics{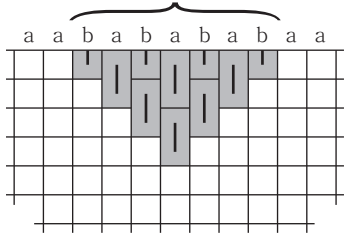}
\caption{Non-trivial boundary state condition}
\label{fig:boundary}
\end{figure}

Using this trick, by letting the bottom and top states as in Figure~\ref{fig:dominomosaic},
we can cover the related four triangular corners by dominos in the unique way
so that the remaining set of squares is the Aztec octagon we consider.
The bottom state of length $m$ consists of three parts; 
$r \! - \! 1$ letters of {\texttt a} and {\texttt b} in turn ending with {\texttt b},
$m \! - \! r \! - \! s \! + \! 2$ letters of only {\texttt a}, and
followed by $s \! - \! 1$ letters of {\texttt a} and {\texttt b} in turn beginning with {\texttt b}.
This is the $b_m(r,s)$th state among $2^m$ states, where for $r, s \geq 2$, 
\begin{equation*} 
\begin{split}
b_m(r,s) = \ & 2^{m-1} (\mbox{or } 2^{m-2}) + \cdots + 2^{m-r+3} + 2^{m-r+1} \hspace{8mm} \\
                 & + 2^{s-2} + 2^{s-4} + \cdots + 2^1 (\mbox {or } 2^0) + 1, 
\end{split}
\end{equation*}
where the choice of $2^{m-1}$ or $2^{m-2}$ (similarly $2^0$ or $2^1$) depends on 
whether $r$ (respectively $s$) is even or odd.
Further denote that 
\begin{equation*} 
\begin{split}
b_m(1,1) = \ & 1, \\
b_m(1,s) = \ & 2^{s-2} + 2^{s-4} + \cdots + 2^1 (\mbox {or } 2^0) + 1, \\
b_m(r,1) = \ & 2^{m-1} (\mbox{or } 2^{m-2}) + \cdots + 2^{m-r+3} + 2^{m-r+1} + 1.
\end{split}
\end{equation*}
Remember that the bottom and top states are obtained by reading off letters from right to left. 
This number $b_m(r,s)$ can be re-defined as written in the theorem.

\begin{figure}[h]
\includegraphics{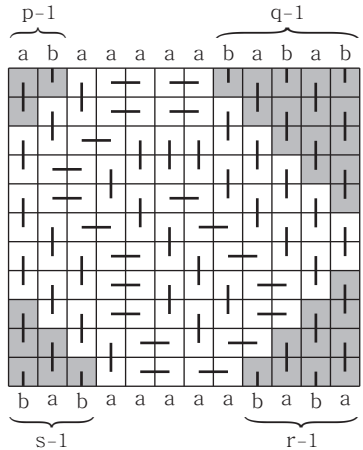}
\caption{Suitably adjacent $12 \! \times \! 11$-mosaic associated to the left picture in Figure~\ref{fig:domino}}
\label{fig:dominomosaic}
\end{figure}

As in the proof of Theorem~\ref{thm:dimer},
the $(b_m(r,s),b_m(q,p))$-entry of $(A_m)^n$ after applying $x \! = \! y \! = \! 1$ and $v \! = \! 0$
is the number of suitably adjacent $m \! \times \! n$-mosaics associated to the triple 
$\langle {\texttt a} \cdots {\texttt a}, \epsilon^m_{b_m(r,s)}, \epsilon^m_{b_m(q,p)} \rangle$.
This completes the proof.
\end{proof}

Lastly, we mention the enumeration problem of domino tilings of an Aztec octagon with holes 
as the right picture in Figure~\ref{fig:domino}.
Let $S$ be a set of squares in $\mathbb{A}_{m \times n}(p,q,r,s)$.
Let $\alpha_{m \times n}(S;p,q,r,s)$ denote the number of domino tilings of $\mathbb{A}_{m \times n}(p,q,r,s)$ 
restricting all squares of $S$ removed.

\begin{theorem} \label{thm:aztecholes}
For domino tilings of an Aztec octagon $\mathbb{A}_{m \times n}(p,q,r,s)$ with a set $S$ of holes, 
$$ \alpha_{m \times n}(S;p,q,r,s) = (b_m(r ,s),b_m(q,p))\mbox{-entry of } \prod^n_{i=1} A_{m,i}, $$
where $A_{m,i}$ is recursively defined in Theorem~\ref{thm:fixed}
and $b_m(r,s)$ is in Theorem~\ref{thm:aztec}.
\end{theorem}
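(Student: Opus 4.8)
The plan is to combine the two devices already developed in the paper: the boundary-index trick from the proof of Theorem~\ref{thm:aztec}, which carves the octagon $\mathbb{A}_{m \times n}(p,q,r,s)$ out of the ambient rectangular mosaic by prescribing the non-trivial $b$-state $\epsilon^m_{b_m(r,s)}$ and $t$-state $\epsilon^m_{b_m(q,p)}$, and the modified bar recurrence from the proof of Theorem~\ref{thm:fixed}, which pins the monomer tile $T_1$ to a prescribed vertex set and forbids it elsewhere. First I would record the combinatorial identity that makes everything go: the number $\alpha_{m \times n}(S;p,q,r,s)$ equals the number of suitably adjacent $m \! \times \! n$-mosaics which carry $T_1$ exactly on the squares of $S$, a dimer tile on every other square, trivial $l$- and $r$-states, and $b$- and $t$-states equal to $\epsilon^m_{b_m(r,s)}$ and $\epsilon^m_{b_m(q,p)}$ respectively. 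Indeed, by the trick used in the proof of Theorem~\ref{thm:aztec}, these two non-trivial boundary states force the four triangular corner regions to be dimer-covered in the unique possible way, so the only freedom left is a domino tiling of the octagon; imposing ``$T_1$ on $S$, dimers elsewhere'' on top of this then records precisely a domino tiling of $\mathbb{A}_{m \times n}(p,q,r,s)$ with the squares of $S$ removed.

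Next I would observe that, since the hypothesis gives $S \subseteq \mathbb{A}_{m \times n}(p,q,r,s)$, no square of $S$ lies in a triangular corner, so every corner vertex is handled by the ``$(k,i) \notin S$'' branch of the bar recurrence of Theorem~\ref{thm:fixed}, and the corner forcing from the proof of Theorem~\ref{thm:aztec} proceeds exactly as before, untouched by the holes. Thus the construction of the bar state matrices $A_{m,i}$ is literally that of Theorem~\ref{thm:fixed} --- at a vertex $(k,i) \in S$ only $T_1$ is admissible, giving the ``hole'' relation with $B_{k,i} = \mathbb{O}_k$; at a vertex $(k,i) \notin S$ only dimer tiles are admissible, giving the ``non-hole'' relation --- and the row-to-row splicing argument of Lemma~\ref{lem:mn}, matching the $t$-state of row $i$ with the $b$-state of row $i+1$, shows that the $(i,j)$-entry of $\prod_{i=1}^n A_{m,i}$ (already specialized at $v=x=y=1$) is the number of suitably adjacent $m \! \times \! n$-mosaics with $s_b = \epsilon^m_i$, $s_t = \epsilon^m_j$, trivial $s_l$ and $s_r$, and $T_1$ exactly on $S$. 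Specializing $(i,j) = (b_m(r,s), b_m(q,p))$ and invoking the identity of the previous paragraph finishes the proof.

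The one place that demands a careful word, and the closest thing here to an obstacle, is the possible overlap between $S$ and the interior squares that the corner boundary states force to be domino-covered: if some square of $S$ sits inside such a forced region, then no mosaic can carry $T_1$ there together with the prescribed boundary states, so the $(b_m(r,s), b_m(q,p))$-entry of $\prod_{i=1}^n A_{m,i}$ is $0$; and correspondingly there is no valid domino tiling of $\mathbb{A}_{m \times n}(p,q,r,s)$ with $S$ removed, so $\alpha_{m \times n}(S;p,q,r,s) = 0$ as well, and the asserted identity still holds. Apart from this degenerate bookkeeping, nothing new is needed: the proof is the composition of the arguments of Theorems~\ref{thm:aztec} and~\ref{thm:fixed}, because ``shape control via the indices $b_m(\cdot,\cdot)$'' and ``hole control via the modified bar recurrence'' act on mutually independent features of the mosaic.
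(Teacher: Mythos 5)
Your proposal is correct and follows exactly the route the paper takes: the paper's own proof of this theorem is the single sentence that Theorem~\ref{thm:fixed} combined with Theorem~\ref{thm:aztec} guarantees the result, and your argument is simply a careful expansion of that combination (boundary states $b_m(r,s)$, $b_m(q,p)$ carve out the octagon; the modified bar recurrence enforces the holes; the two mechanisms act on independent features of the mosaic). Your extra remark about the degenerate overlap of $S$ with the forced corner regions is a harmless addition that the hypothesis $S \subseteq \mathbb{A}_{m \times n}(p,q,r,s)$ already rules out.
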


\begin{proof}
Theorem~\ref{thm:fixed} combined with Theorem~\ref{thm:aztec} guarantees the theorem.
\end{proof}

\end{document}